\documentclass[12pt, reqno]{amsart}

\usepackage{amsmath}
\usepackage{amsfonts}
\usepackage{amssymb}
\usepackage[all]{xy}
\setlength{\textheight}{20cm} \textwidth16cm \hoffset=-2truecm
\begin{document}
\numberwithin{equation}{section}

\def\1#1{\overline{#1}}
\def\2#1{\widetilde{#1}}
\def\3#1{\widehat{#1}}
\def\4#1{\mathbb{#1}}
\def\5#1{\frak{#1}}
\def\6#1{{\mathcal{#1}}}

\newcommand{\de}{\partial}
\newcommand{\R}{\mathbb R}
\newcommand{\Ha}{\mathbb H}
\newcommand{\al}{\alpha}
\newcommand{\tr}{\widetilde{\rho}}
\newcommand{\tz}{\widetilde{\zeta}}
\newcommand{\tk}{\widetilde{C}}
\newcommand{\tv}{\widetilde{\varphi}}
\newcommand{\hv}{\hat{\varphi}}
\newcommand{\tu}{\tilde{u}}
\newcommand{\tF}{\tilde{F}}
\newcommand{\debar}{\overline{\de}}
\newcommand{\Z}{\mathbb Z}
\newcommand{\C}{\mathbb C}
\newcommand{\Po}{\mathbb P}
\newcommand{\zbar}{\overline{z}}
\newcommand{\G}{\mathcal{G}}
\newcommand{\So}{\mathcal{S}}
\newcommand{\Ko}{\mathcal{K}}
\newcommand{\U}{\mathcal{U}}
\newcommand{\B}{\mathbb B}
\newcommand{\oB}{\overline{\mathbb B}}
\newcommand{\Cur}{\mathcal D}
\newcommand{\Dis}{\mathcal Dis}
\newcommand{\Levi}{\mathcal L}
\newcommand{\SP}{\mathcal SP}
\newcommand{\Sp}{\mathcal Q}
\newcommand{\A}{\mathcal O^{k+\alpha}(\overline{\mathbb D},\C^n)}
\newcommand{\CA}{\mathcal C^{k+\alpha}(\de{\mathbb D},\C^n)}
\newcommand{\Ma}{\mathcal M}
\newcommand{\Ac}{\mathcal O^{k+\alpha}(\overline{\mathbb D},\C^{n}\times\C^{n-1})}
\newcommand{\Acc}{\mathcal O^{k-1+\alpha}(\overline{\mathbb D},\C)}
\newcommand{\Acr}{\mathcal O^{k+\alpha}(\overline{\mathbb D},\R^{n})}
\newcommand{\Co}{\mathcal C}
\newcommand{\Hol}{{\sf Hol}(\mathbb H, \mathbb C)}
\newcommand{\Aut}{{\sf Aut}(\mathbb D)}
\newcommand{\D}{\mathbb D}
\newcommand{\oD}{\overline{\mathbb D}}
\newcommand{\oX}{\overline{X}}
\newcommand{\loc}{L^1_{\rm{loc}}}
\newcommand{\la}{\langle}
\newcommand{\ra}{\rangle}
\newcommand{\thh}{\tilde{h}}
\newcommand{\N}{\mathbb N}
\newcommand{\kd}{\kappa_D}
\newcommand{\Hr}{\mathbb H}
\newcommand{\ps}{{\sf Psh}}
\newcommand{\Hess}{{\sf Hess}}
\newcommand{\subh}{{\sf subh}}
\newcommand{\harm}{{\sf harm}}
\newcommand{\ph}{{\sf Ph}}
\newcommand{\tl}{\tilde{\lambda}}
\newcommand{\gdot}{\stackrel{\cdot}{g}}
\newcommand{\gddot}{\stackrel{\cdot\cdot}{g}}
\newcommand{\fdot}{\stackrel{\cdot}{f}}
\newcommand{\fddot}{\stackrel{\cdot\cdot}{f}}
\def\v{\varphi}
\def\Re{{\sf Re}\,}
\def\Im{{\sf Im}\,}
\def\ext{{\sf ext}\,}
\def\Lr{{\sf Lr}\,}
\def\tr{{\sf tr}\,}

\def\Label#1{\label{#1}}

% Standard sets

\def\cn{{\C^n}}
\def\cnn{{\C^{n'}}}
\def\ocn{\2{\C^n}}
\def\ocnn{\2{\C^{n'}}}
\def\je{{\6J}}
\def\jep{{\6J}_{p,p'}}
\def\th{\tilde{h}}

% Abbreviations

\def\dist{{\rm dist}}
\def\const{{\rm const}}
\def\rk{{\rm rank\,}}
\def\id{{\sf id}}
\def\aut{{\sf aut}}
\def\Aut{{\sf Aut}}
\def\CR{{\rm CR}}
\def\GL{{\sf GL}}
\def\Re{{\sf Re}\,}
\def\Im{{\sf Im}\,}
\def\U{{\sf U}}
\def\Tang{{\rm Tang}_1(\mathbb{C}^N,0)}
\def\la{\langle}
\def\ra{\rangle}

\emergencystretch15pt \frenchspacing

\newtheorem{theorem}{Theorem}[section]
\newtheorem{lemma}[theorem]{Lemma}
\newtheorem{proposition}[theorem]{Proposition}
\newtheorem{problem}[theorem]{Problem}
\newtheorem{corollary}[theorem]{Corollary}

\theoremstyle{definition}
\newtheorem{definition}[theorem]{Definition}
\newtheorem{example}[theorem]{Example}

\theoremstyle{remark}
\newtheorem{remark}[theorem]{Remark}
\numberwithin{equation}{section}

%tentative title
\title[Loewner on complete hyperbolic manifolds]{Infinitesimal generators and the Loewner equation on complete hyperbolic manifolds}

\author[L. Arosio]{Leandro Arosio$^{\ddagger}$}
\address{L. Arosio: Istituto Nazionale di Alta Matematica ``Francesco Severi'', Citt\`a Universitaria, Piazzale Aldo Moro 5, 00185 Rome, Italy}
\email{arosio@altamatematica.it}
\thanks{$^{\ddagger}$Titolare di una Borsa della Fondazione Roma - Terzo Settore  bandita dall'Istituto Nazionale di Alta Matematica}

\author[F. Bracci]{Filippo Bracci}
\address{F. Bracci: Dipartimento Di Matematica\\
Universit\`{a} di Roma \textquotedblleft Tor Vergata\textquotedblright\ \\
Via Della Ricerca Scientifica 1, 00133 \\
Roma, Italy} \email{fbracci@mat.uniroma2.it}

\date\today

\begin{abstract}
We characterize infinitesimal generators on complete hyperbolic complex manifolds without any regularity assumption on the Kobayashi distance. This allows to prove a  general Loewner type equation with regularity of any order $d\in [1,+\infty ]$. Finally, based on these results, we focus on some open problems naturally arising.
\end{abstract}

%\subjclass[2000]{Primary  Secondary }

\keywords{Loewner equation, complete hyperbolic manifolds}

\maketitle

\section{Introduction}

The classical theory of Ch. Loewner has been used and generalized in many aspects. We refer the reader to the book \cite{G-K}, and the recent survey papers \cite{ABCD, Br} for an updated account.

In the papers \cite{BCD1, Bracci-Contreras-Diaz-II}, the second named author with M. D. Contreras and S. D\'iaz-Madrigal developed a general theory of Loewner type both on the unit disc and on complex (Kobayashi) complete hyperbolic manifolds,  which relates evolution families to  Herglotz vector fields, via the Loewner  ODE. Next, in \cite{CDG} Contreras, D\'iaz-Madrigal and P. Gumenyuk fitted   the Loewner PDE in the picture, and, in \cite{ABHK}, the authors with H. Hamada and G. Kohr extended (with different methods) such results to complex complete hyperbolic manifolds.

However, the Loewner ODE theory required some technical hypotheses which, although satisfied in the most interesting cases (such as the unit ball of $\C^q$), made the theory a bit artificial in its generality. The aim of the present paper is exactly to show that such hypotheses are redundant.

In order to properly state the results, we need to give some notions. First, we recall that, given a complex manifold $M$, a holomorphic vector field $H$ on $M$ is said an {\sl infinitesimal generator} provided the Cauchy problem
%$\stackrel{\cdot}{x}(t)=H(x(t))$, $x(0)=z_0$
$$
\begin{cases}
 \overset{\bullet}{z}(t)=H(z(t)),\\
z(0)=z_0
\end{cases}
$$
has a solution $z:[0,+\infty)\to M$ for all $z_0\in M$.

Infinitesimal generators are called this way because they generate continuous semigroups of holomorphic self-maps, see, {\sl e.g.}, \cite{Abate-generators, RS} for details.
In \cite{BCM} it is shown that if $D\subset \C^q$ is a bounded strongly convex domain with smooth boundary then a holomorphic vector field $H$ is an infinitesimal generator if and only if
\[
(dk_D)_{(z,w)}(H(z),H(w))\leq 0\quad \forall z,w\in D, z\neq w,
\]
where $k_D:D\times D\to \R^+$ is the Kobayashi distance of $D$ (see \cite{Kob} or \cite{Abate} for definition and properties), which is known to be $C^\infty$ outside the diagonal by  L. Lempert \cite{Le}.

As a first result we prove that the same characterization of infinitesimal generators holds in general. After having shown that the Kobayashi (pseudo)distance $k_M$ on a complex manifold $M$ is locally Lipschitz---and denoting by $dk_M$ its Dini directional derivative, which coincides a.e. with the usual differential---we prove  the following:

\begin{theorem}\label{autonomo}
Let $M$ be a complete hyperbolic complex manifold with Kobayashi distance $k_M$, and let $H$ be an holomorphic vector field on $M$. Then the following are equivalent:
\begin{itemize}
\item[a)] there exists $\varepsilon>0$ and a family of holomorphic mappings $(f_t\colon M\to M)_{t\in [0,\varepsilon)}$ such that $f_0(z)=z$ and
$$\frac{\de}{\de t} f_t(z)|_{t=0}= \lim_{t\to 0^+} \frac{f_t(z)-z}{t}=H(z),$$

\item[b)] for every $z\neq w\in M$
one has  $$(dk_M)_{(z,w)}(H(z),H(w))\leq 0,$$
\item[c)] $H$ is an infinitesimal generator.
\end{itemize}
\end{theorem}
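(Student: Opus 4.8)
The plan is to establish the cycle $(c)\Rightarrow(a)\Rightarrow(b)\Rightarrow(c)$, using only the two facts recalled above: holomorphic self-maps of $M$ are nonexpansive for $k_M$, and $k_M$ is locally Lipschitz, so that its Dini directional derivative $dk_M$ exists and obeys a chain rule along $C^1$ curves. The implication $(c)\Rightarrow(a)$ is essentially a restatement of definitions: an infinitesimal generator solves the autonomous Cauchy problem from every initial point for all positive times, so the time-$t$ maps $\phi_t\colon M\to M$ are everywhere defined, holomorphic (by the theory of holomorphic ODEs), satisfy $\phi_0=\mathrm{id}$ and $\partial_t\phi_t(z)|_{t=0}=H(z)$; taking $f_t=\phi_t$ yields (a).

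For $(a)\Rightarrow(b)$, fix $z\neq w$. Each $f_t$ being a holomorphic self-map, the distance-decreasing property of $k_M$ gives $k_M(f_t(z),f_t(w))\le k_M(z,w)$ for every $t$, with equality at $t=0$. Working in local coordinates near $(z,w)$ and writing $f_t(z)=z+tH(z)+o(t)$, the local Lipschitz property of $k_M$ lets me replace the curve $t\mapsto(f_t(z),f_t(w))$ by its first-order expansion at the cost of $o(t)$ in the difference quotients; hence $\limsup_{t\to0^+} t^{-1}\big(k_M(f_t(z),f_t(w))-k_M(z,w)\big)$ coincides with $(dk_M)_{(z,w)}(H(z),H(w))$. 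Since every difference quotient is $\le 0$, so is this limit, which is precisely (b).

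The substantial step is $(b)\Rightarrow(c)$. First I would show that (b) makes the local flow $\phi_t$ of $H$ nonexpansive wherever defined: for $a\neq b$ with both orbits defined on $[0,s]$, uniqueness keeps $\phi_u(a)\neq\phi_u(b)$, the map $u\mapsto k_M(\phi_u(a),\phi_u(b))$ is locally Lipschitz with velocity $(H(\phi_u(a)),H(\phi_u(b)))$, and the same chain rule combined with (b) gives upper right Dini derivative $\le 0$; a continuous function with nonpositive upper Dini derivative is nonincreasing, so $k_M(\phi_s(a),\phi_s(b))\le k_M(a,b)$. Now fix $z_0$, let $[0,T)$ be the maximal interval of the orbit $z(t)=\phi_t(z_0)$, and suppose $T<+\infty$. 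For $0\le s\le t<T$ the flow identity $z(t)=\phi_s(z(t-s))$ together with nonexpansiveness of $\phi_s$ gives
\[
k_M(z(s),z(t))=k_M(\phi_s(z_0),\phi_s(z(t-s)))\le k_M(z_0,z(t-s)).
\]
As $s,t\to T^-$ one has $t-s\to 0$, so $z(t-s)\to z_0$ and the right-hand side tends to $0$; hence $t\mapsto z(t)$ is $k_M$-Cauchy as $t\to T^-$. By completeness it converges to some $z_*\in M$, and solving the Cauchy problem from $z_*$ extends the orbit beyond $T$, contradicting maximality. Thus $T=+\infty$ for every $z_0$, that is, (c).

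I expect the main obstacle to be the careful bookkeeping in $(b)\Rightarrow(c)$: verifying the chain rule for $dk_M$ along the flow, checking that all the orbits entering the nonexpansiveness estimate are defined on the required intervals (which forces the inequalities $s<T$ and $t<T$ to be tracked precisely so that $\phi_s(z(t-s))$ makes sense), and confirming that the Cauchy limit $z_*$ indeed lies in $M$ and permits re-solving the ODE. Once these points are secured, the analytic content is soft, the completeness hypothesis entering only through the final convergence of the Cauchy orbit.
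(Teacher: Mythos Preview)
Your proposal is correct and follows the paper's cycle $(c)\Rightarrow(a)\Rightarrow(b)\Rightarrow(c)$, with the first two implications handled identically: the flow gives $(c)\Rightarrow(a)$, and the distance-decreasing property of holomorphic self-maps together with the chain rule for the Dini derivative of the locally Lipschitz $k_M$ gives $(a)\Rightarrow(b)$.

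The only point worth flagging is a minor variation in how you close $(b)\Rightarrow(c)$. Both you and the paper first show that $t\mapsto k_M(\phi_t(a),\phi_t(b))$ is nonincreasing wherever the local flow is defined. From there the paper compares two \emph{arbitrary} initial points $z,w$: if $I(z)<I(w)$ then $\phi_t(z)$ must leave every $k_M$-ball as $t\to I(z)$ while $\phi_t(w)$ stays in a compact set, contradicting the monotone bound; hence all escape times coincide, and autonomy then forces them to be $+\infty$. You instead compare two points \emph{on the same orbit}, using the flow identity $z(t)=\phi_s(z(t-s))$ to bound $k_M(z(s),z(t))\le k_M(z_0,z(t-s))$, deduce the orbit is $k_M$-Cauchy, extract a limit by completeness, and re-solve. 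Your version is slightly more self-contained, since the paper's one-line ``autonomous implies $I(z)=+\infty$'' really does require the restarting argument you spell out; the paper's version in turn avoids the domain-of-definition bookkeeping for $\phi_s(z(t-s))$ that you correctly flag as needing care (and which your inequalities $s\le t<T$ do handle).
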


This theorem allows to extend the so called ``product formula'' of S. Reich and D. Shoikhet \cite[Theorem 3]{reich-shoikhet} to complete hyperbolic manifolds and to prove that the set of infinitesimal generators of a complete hyperbolic complex manifold form a (closed) real cone (see Section \ref{due}).

With Theorem \ref{autonomo} in mind, we can give the following definition (cfr. \cite[Definition 3]{Bracci-Contreras-Diaz-II}):

\begin{definition}
Let $M$ be a complex manifold endowed with an Hermitian metric $\|.\|$, and let $d\in [1,+\infty]$.
A \textit{weak holomorphic vector field} of order $d\geq 1$ on $M$ is a mapping
$$G\colon M\times[0,+\infty)\to TM$$
satisfying
\begin{itemize}
\item[WHVF1.] for all $z\in M$ the map $t\mapsto G(z,t)$ is measurable,
\item[WHVF2.] for all  $t\geq 0$ the map $z\mapsto G(z,t)$ is a  holomorphic vector field,
\item[WHVF3.] for any compact set $K\subset M$ and for any $T>0$ there exists a non-negative function $c_{K,T}\in L^d([0,T],\R)$ such that $$\|G(z,t)\|\leq c_{K,T}(t),\quad z\in K, 0\leq t\leq T.$$
\end{itemize}
A \textit{Herglotz vector field} of order $d\geq 1$ on $M$ is a weak holomorphic vector field of order $d\geq 1$ such that $M\ni z\mapsto G(z,t)$ is an infinitesimal generator for a.e. fixed $t\in [0,+\infty)$.
\end{definition}

Now we are going to define evolution families:

\begin{definition}\label{def-ev}
Let $M$ be a complex manifold endowed with an Hermitian metric and let $d_M$ denote the associated integrated distance. A family $(\varphi_{s,t})_{0\leq s\leq t<+\infty}$ of holomorphic self-maps of $M$  is an {\sl evolution family of order $d\in [1,+\infty]$} (in short, an {\sl $L^d$-evolution family}) if

\begin{enumerate}
\item[EF1.] $\varphi_{s,s}={\sf id}_{M},$

\item[EF2.] $\varphi_{s,t}=\varphi_{u,t}\circ\varphi_{s,u}$ for all $0\leq
s\leq u\leq t<+\infty,$

\item[EF3.] for any compact subset $K\subset M$ and for any $T>0$ there exists a
non-negative function $k_{K,T}\in L^{d}([0,T],\mathbb{R})$
such that for all $0\leq s\leq u\leq t\leq T$ and for all $z\in K$,
\[
d_M(\varphi_{s,u}(z),\varphi_{s,t}(z))\leq\int_{u}^{t}k_{K,T}(\xi)d\xi.
\]
\end{enumerate}
\end{definition}

In \cite{Bracci-Contreras-Diaz-II} it has been proved the following result:

\begin{theorem}\cite{Bracci-Contreras-Diaz-II}
Let $M$ be a complete hyperbolic manifold with Kobayashi distance $k_M$. Assume that $k_M\in C^1(M\times M\setminus\hbox{Diag})$. \begin{enumerate}
  \item Let  $G(z,t)$ be a Herglotz vector field of order $d\in [1,+\infty]$ on $M$. Then there exists a unique evolution family $(\v_{s,t})$ of order $d$ on $M$ such that
      \begin{equation}\label{L-ODE}
      \frac{\de \v_{s,t}(z)}{\de t}=G(\v_{s,t}(z),t) \quad\hbox{a.e.\ } t\in [s,+\infty).
      \end{equation}
  \item Let $(\v_{s,t})$ be an evolution family of order $+\infty$ on $M$. Then there exists a Herglotz vector field $G(z,t)$ of order $+\infty$ which satisfies \eqref{L-ODE}. Moreover, if $G'(z,t)$ is another weak holomorphic vector field which satisfies \eqref{L-ODE} then $G(z,t)=G'(z,t)$ for all $z\in M$ and a.e. $t\geq 0$.
\end{enumerate}
\end{theorem}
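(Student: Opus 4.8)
The plan is to handle the two implications separately, using Theorem~\ref{autonomo} throughout to replace the infinitesimal-generator hypothesis on $z\mapsto G(z,t)$ by the contraction inequality $(dk_M)_{(z,w)}(G(z,t),G(w,t))\le 0$; under the present assumption $k_M\in C^1(M\times M\setminus\mathrm{Diag})$ this inequality is a genuine derivative and is the source of every estimate below.

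\emph{Existence for (1).} Fix $s\ge 0$ and $z_0\in M$, and read \eqref{L-ODE} in a holomorphic chart as the Carath\'eodory problem $\dot x(t)=G(x(t),t)$, $x(s)=z_0$. Conditions WHVF1--WHVF3 are exactly the Carath\'eodory hypotheses ($t\mapsto G(z,t)$ measurable, $z\mapsto G(z,t)$ holomorphic hence locally Lipschitz, and a local $L^d$ bound), so there is a unique maximal absolutely continuous local solution. The first key point is global existence: since each $G(\cdot,t)$ is an infinitesimal generator, no trajectory can leave $M$ in finite time, and the non-expansiveness $\frac{d}{dt}k_M(x(t),y(t))=(dk_M)(G(x,t),G(y,t))\le 0$ of any two solutions, combined with the completeness of $M$ (closed Kobayashi balls are compact) and the length bound $k_M(x(t),z_0)\le\int_s^t\|G(x(\xi),\xi)\|\,d\xi$, confines the solution to a fixed compact set on each $[s,T]$. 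Hence the solution extends to all of $[s,+\infty)$, and we set $\varphi_{s,t}(z):=x(t)$. Property EF1 is immediate, EF2 is the uniqueness of Carath\'eodory solutions applied to the concatenation at $u$, and EF3 is read off from WHVF3 together with the length bound. Finally $z\mapsto\varphi_{s,t}(z)$ is a locally uniform limit of its Euler/Picard approximants, each holomorphic in the initial datum because $G(\cdot,\xi)$ is; by the Weierstrass/Vitali theorem the limit is holomorphic, so $(\varphi_{s,t})$ is an $L^d$-evolution family, unique by uniqueness of Carath\'eodory solutions.

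\emph{Reconstruction for (2).} Given an $L^\infty$-evolution family, EF3 with $d=\infty$ makes $t\mapsto\varphi_{s,t}(z)$ locally Lipschitz, hence differentiable for a.e.\ $t$. I would define, for a.e.\ $t$,
$$
G(z,t):=\lim_{h\to 0^+}\frac{\varphi_{t,t+h}(z)-z}{h},
$$
an expression that is manifestly independent of $s$. For such $t$ the right-hand side is a locally uniform limit of holomorphic maps, so $G(\cdot,t)$ is a holomorphic vector field; and since $(\varphi_{t,t+h})_{h\ge 0}$ is a family of self-maps with $\varphi_{t,t}=\mathrm{id}$ whose derivative at $h=0$ is $G(\cdot,t)$, the implication a)$\Rightarrow$c) of Theorem~\ref{autonomo} shows $G(\cdot,t)$ is an infinitesimal generator, i.e.\ $G$ is a Herglotz vector field. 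The cocycle identity $\varphi_{s,t+h}=\varphi_{t,t+h}\circ\varphi_{s,t}$ then gives, for a.e.\ $t$,
$$
\frac{\varphi_{s,t+h}(z)-\varphi_{s,t}(z)}{h}=\frac{\varphi_{t,t+h}(\varphi_{s,t}(z))-\varphi_{s,t}(z)}{h}\ \xrightarrow[h\to 0^+]{}\ G(\varphi_{s,t}(z),t),
$$
which is \eqref{L-ODE}. Uniqueness follows because any weak holomorphic vector field $G'$ satisfying \eqref{L-ODE} must, by taking $s=t$, coincide with the same diagonal right-derivative, so $G'(\cdot,t)=G(\cdot,t)$ for a.e.\ $t$.

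\emph{Main obstacle.} The delicate part, and the one for which $k_M\in C^1(M\times M\setminus\mathrm{Diag})$ is used essentially, is the measure theory in (2): one must show that the diagonal limit defining $G$ exists for a.e.\ $t$ in a way uniform enough on compacta to yield joint measurability of $(z,t)\mapsto G(z,t)$ (so that WHVF1 holds) and to upgrade the pointwise a.e.-in-$t$ convergence to the ODE for \emph{every} base point $z$. The $C^1$ regularity of $k_M$ is what allows one to differentiate $t\mapsto k_M(\varphi_{s,t}(z),w)$ and to run a Lebesgue-point/Scorza--Dragoni argument synchronizing the pointwise derivatives with the metric estimate EF3; this is precisely the hypothesis the present paper later removes.
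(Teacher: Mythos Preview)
This theorem is not proved in the present paper; it is quoted from \cite{Bracci-Contreras-Diaz-II} as background, and the paper's contribution is precisely to remove the $C^1$ hypothesis and to allow any order $d$ in part~(2) (Theorem~\ref{main}, proved via Propositions~\ref{A1} and~\ref{A3}). So there is no proof here to compare against verbatim; what follows compares your sketch to those two propositions, which subsume the cited result.

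For part~(1) your global-existence step has a genuine gap. The sentence ``since each $G(\cdot,t)$ is an infinitesimal generator, no trajectory can leave $M$ in finite time'' conflates the autonomous and non-autonomous problems: completeness of each frozen flow $\dot z=G(z,t_0)$ says nothing directly about the non-autonomous trajectory. And your ``length bound'' $k_M(x(t),z_0)\le\int_s^t\|G(x(\xi),\xi)\|\,d\xi$ is circular, because WHVF3 bounds $\|G(x(\xi),\xi)\|$ only on a compact set $K$, and you cannot invoke that bound before knowing $x(\xi)\in K$. The argument in Proposition~\ref{A1} is different: from the non-expansiveness $t\mapsto k_M(\varphi_{s,t}(z),\varphi_{s,t}(w))$ one first deduces that the maximal escaping times $I(s,z)$ and $I(s,w)$ coincide for \emph{every} pair $z\neq w$ (if $I(s,z)<I(s,w)$ then $\{\varphi_{s,t}(w)\}_{t\le I(s,z)}$ is relatively compact while $\varphi_{s,t}(z)$ escapes, forcing $k_M(\varphi_{s,t}(z),\varphi_{s,t}(w))\to\infty$, a contradiction). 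That the common escaping time is $+\infty$, and the verification of EF1--EF3 and holomorphy in $z$, are then separate steps deferred to \cite{Bracci-Contreras-Diaz-II}.

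For part~(2) your ``main obstacle'' is indeed the crux, but the paper does not resolve it by a Lebesgue-point or Scorza--Dragoni synchronisation of the diagonal limit. Proposition~\ref{A3} abandons the attempt to make $\lim_{h\to0^+}h^{-1}(\varphi_{t,t+h}(z)-z)$ exist outright. Instead it forms the discrete sequence $G_{n,s}(z)=n(\varphi_{s,s+1/n}(z)-z)$, uses EF3 and the Hardy--Littlewood maximal function to show this sequence is locally bounded for $s$ outside a null set, and then applies a measurable-selection theorem (Castaing--Valadier, Theorem~\ref{seleccion medible}) to the multifunction $s\mapsto\{\text{accumulation points of }(G_{n,s})_n\text{ in }{\sf Hol}(U',\C^q)\}$ to extract a measurable $s\mapsto G(\cdot,s)$. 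This bypasses entirely the question of whether the full limit exists and simultaneously delivers joint measurability; it is also why the argument works for every $d\ge1$ and without any regularity assumption on $k_M$. Your direct-limit route, even under the $C^1$ hypothesis, does not supply the joint measurability or the uniform-in-$z$ a.e.\ existence you need without substantial additional machinery.
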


In case $M=\D$ the unit disc in $\C$, in \cite{BCD1} it is proved that the second part of the previous theorem holds even when the evolution family has order $d\in [1,+\infty]$, giving rise to a  Herglotz vector field of the same order. Such a result is based on the Berkson-Porta formula for infinitesimal generators, a tool which is not available in higher dimensions. In \cite{HKM} the same is proved for the unit ball of $\C^q$, using the Loewner PDE defined in  \cite{ABHK}.
% which is based on the one-to-one correspondence between $L^d$-evolution families and ``$L^d$-Loewner chains'', as proved in

In this  paper we prove (see Propositions \ref{A1},  \ref{A3}) the following result:

\begin{theorem}\label{main}
Let $M$ be a complete hyperbolic manifold.
\begin{enumerate}
  \item Let  $G(z,t)$ be a Herglotz vector field of order $d\in [1,+\infty]$ on $M$. Then there exists a unique evolution family $(\v_{s,t})$ of order $d$ on $M$ which satisfies \eqref{L-ODE}.
  \item Let $(\v_{s,t})$ be an evolution family of order $d\in [1,+\infty]$ on $M$. Then there exists a Herglotz vector field $G(z,t)$ of order $d$ which satisfies \eqref{L-ODE}. Moreover, if $G'(z,t)$ is another weak holomorphic vector field which satisfies \eqref{L-ODE} then $G(z,t)=G'(z,t)$ for all $z\in M$ and a.e. $t\geq 0$.
\end{enumerate}
\end{theorem}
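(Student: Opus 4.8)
The plan is to treat the two parts separately, using Theorem \ref{autonomo} as the bridge that removes every regularity hypothesis on $k_M$.

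For part (1) I would first solve the Loewner ODE \eqref{L-ODE} locally in a holomorphic chart. Since $z\mapsto G(z,t)$ is holomorphic it is locally Lipschitz in $z$ with a constant that, by WHVF3, is dominated by an $L^d([0,T])\subset L^1([0,T])$ function, so Carath\'eodory's existence and uniqueness theorem produces a unique locally absolutely continuous local solution $t\mapsto\varphi_{s,t}(z)$. The delicate point is globalization: I must rule out finite-time blow-up and show each $\varphi_{s,t}$ is a self-map of all of $M$. Here I would invoke the machinery built from Theorem \ref{autonomo}, namely the distance-decreasing property and the product formula of Reich--Shoikhet extended to complete hyperbolic $M$ (see Section \ref{due}). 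Concretely, for a.e.\ $t$ the field $G(\cdot,t)$ is an infinitesimal generator, so $(dk_M)_{(z,w)}(G(z,t),G(w,t))\le 0$; via a chain-rule estimate for the upper Dini derivative of the locally Lipschitz function $t\mapsto k_M(\varphi_{s,t}(z),\varphi_{s,t}(w))$ this forces that quantity to be non-increasing. Realizing $\varphi_{s,t}$ as a locally uniform limit of compositions of the globally defined semigroups generated by the frozen fields $G(\cdot,\tau)$ exhibits it as a self-map $M\to M$ for all finite time, which prevents escape. The axioms are then routine: EF1 and EF2 follow from the initial condition and uniqueness of solutions, while EF3 comes from $d_M(\varphi_{s,u}(z),\varphi_{s,t}(z))\le\int_u^t\|G(\varphi_{s,\xi}(z),\xi)\|\,d\xi\le\int_u^t c_{K,T}(\xi)\,d\xi$, using that trajectories issuing from a compact $K$ stay in a compact set.

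Part (2) is the harder direction, and the place where both the removal of the $C^1$ assumption and the extension to arbitrary $d$ occur. The natural candidate is the diagonal right derivative
$$G(w,t):=\lim_{h\to 0^+}\frac{\varphi_{t,t+h}(w)-w}{h},$$
computed in a chart. My plan is: (a) establish joint continuity of $(s,t,z)\mapsto\varphi_{s,t}(z)$ and, from EF3, local absolute continuity of $t\mapsto\varphi_{s,t}(z)$; (b) use the cocycle identity $\varphi_{s,t+h}=\varphi_{t,t+h}\circ\varphi_{s,t}$ to write $\tfrac{\varphi_{t,t+h}(\varphi_{s,t}(z))-\varphi_{s,t}(z)}{h}=\tfrac{\varphi_{s,t+h}(z)-\varphi_{s,t}(z)}{h}$, so that wherever $t\mapsto\varphi_{s,t}(z)$ is differentiable the diagonal limit exists at the point $w=\varphi_{s,t}(z)$ and equals $\partial_t\varphi_{s,t}(z)$; (c) run a Fubini argument on $\{(s,z,t):s\le t\}$ to find, for a.e.\ $t$, a full-measure set of pairs $(s,z)$ for which this holds and whose images $\varphi_{s,t}(z)$ are dense in $M$, density coming from $\varphi_{s,t}\to{\sf id}$ as $s\to t^-$. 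Having pinned the limit on a dense set, I would upgrade to a genuine vector field by normal families: the bound $d_M(\varphi_{t,t+h}(w),w)\le\int_t^{t+h}k_{K,T}$ shows the difference quotients are locally bounded near every Lebesgue point $t$ of $k_{K,T}$, hence form a normal family whose only possible limit is fixed by the dense set, so they converge locally uniformly to a holomorphic field $G(\cdot,t)$.

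The remaining verifications then fall out. Since $\varphi_{t,t+h}$ are holomorphic self-maps with $\varphi_{t,t}={\sf id}$, condition (a) of Theorem \ref{autonomo} holds with $f_h=\varphi_{t,t+h}$, so each $G(\cdot,t)$ is an infinitesimal generator: this is exactly where dropping the $C^1$ hypothesis becomes possible, as no differentiability of $k_M$ is invoked. The estimate above also yields $\|G(\cdot,t)\|\le k_{K,T}(t)$, giving WHVF3 with the sharp order $d$ (this is what pushes the result beyond $d=+\infty$); WHVF1 follows from the pointwise-limit construction; step (b) is precisely \eqref{L-ODE}; and uniqueness of $G$ holds because any two weak holomorphic vector fields solving \eqref{L-ODE} agree along the dense family of trajectories, hence everywhere by continuity in $z$. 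I expect the density/Fubini step (c) to be the main obstacle, since it must simultaneously control the measurability of the exceptional set in $(s,z,t)$ and compensate for the possible non-surjectivity of the maps $\varphi_{s,t}$.
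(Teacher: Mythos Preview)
Your treatment of part (1) contains the right core idea --- the monotonicity of $t\mapsto k_M(\varphi_{s,t}(z),\varphi_{s,t}(w))$ via Theorem \ref{autonomo}(b) --- but you do not exploit it fully and instead take an unnecessary detour. The paper uses exactly this monotonicity, together with complete hyperbolicity, to rule out escape directly: if $I(s,z)<I(s,w)$ then $\{\varphi_{s,t}(w)\}_{t\in[s,I(s,z)]}$ is relatively compact while $\varphi_{s,t}(z)$ leaves every compact set, forcing $k_M(\varphi_{s,t}(z),\varphi_{s,t}(w))\to\infty$, a contradiction; hence all escape times coincide and the rest follows Steps 2--6 of \cite[Proposition~1]{Bracci-Contreras-Diaz-II}. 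Your proposed route through a Trotter-type approximation by compositions of the autonomous semigroups of the frozen fields $G(\cdot,\tau)$ is not needed and is not justified by what is available: Proposition~\ref{product} is proved only for a single infinitesimal generator, not for a measurable one-parameter family, and establishing the non-autonomous product formula would itself presuppose global existence of the Loewner flow.

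For part (2) your plan is coherent and could likely be pushed through, but the paper takes a technically different route that sidesteps precisely the obstacle you flag in step (c). Rather than the continuous diagonal limit together with a Fubini/density argument, the paper works in a chart with the discrete difference quotients $G_{n,s}(z)=n(\varphi_{s,s+1/n}(z)-z)$ and bounds them, uniformly in $n$, by the Hardy--Littlewood maximal function ${\sf Max}_{k_T}(s)$ of the EF3 majorant, which is finite for a.e.\ $s$; this replaces your Lebesgue-point estimate. The field $G(\cdot,s)$ is then produced not by pinning a normal-family limit on a dense set of images $\varphi_{s,t}(z)$, but by applying the Castaing--Valadier measurable selection theorem (Theorem~\ref{seleccion medible}) to the multifunction $s\mapsto{\sf ac}(G_{n,s})$ of accumulation points in ${\sf Hol}(U',\C^q)$. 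This delivers measurability in $s$ for free and never touches surjectivity or density of the maps $\varphi_{s,t}$. The order-$d$ bound then comes from passing to the limit in $\|G_{n_k(s),s}\|\le n_k(s)\int_s^{s+1/n_k(s)}k_T$, the ODE \eqref{L-ODE} is verified as in steps 6'--8' of \cite[Proposition~2]{Bracci-Contreras-Diaz-II}, and --- here the two approaches agree --- the Herglotz property follows from Theorem~\ref{autonomo}(a) with $f_h=\varphi_{s,s+h}$.
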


We end up the paper with a section of natural open problems deriving from what we explained before.

\section{Infinitesimal generators on complete hyperbolic manifolds}\label{due}

\subsection{Regularity of the Kobayashi distance} We first recall some definition from analysis.
A function on a manifold is said to be {\sl locally Lipschitz} if it is locally Lipschitz on one---and hence any---chart. We show that the Kobayashi pseudodistance on a complex manifold is locally Lipschitz. First we recall the following estimate  (for a proof, see, {\sl e.g.} \cite{Abate}).

\begin{lemma}\label{abate}
Let $M$ be a complex manifold, $U\subset M$ an open domain and $\psi\colon U\to \B^q$ a biholomorphism from $U$ to the open ball $\B^q\subset \C^q$. Then for every compact subset $K\subset U$ there exists $C_K>0$ such that $$k_M(z,w)\leq C_K\|\psi(z)-\psi(w)\|,\quad z,w\in K.$$
\end{lemma}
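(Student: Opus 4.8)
The plan is to exploit the contracting property of the Kobayashi pseudodistance under holomorphic maps so as to reduce the whole estimate to a computation on the ball. Since the inclusion $U\hookrightarrow M$ is holomorphic, the Kobayashi pseudodistance decreases, whence $k_M(z,w)\le k_U(z,w)$ for all $z,w\in U$; and since $\psi\colon U\to\B^q$ is a biholomorphism it is an isometry for the respective Kobayashi pseudodistances, so $k_U(z,w)=k_{\B^q}(\psi(z),\psi(w))$. Combining the two gives $k_M(z,w)\le k_{\B^q}(\psi(z),\psi(w))$ for all $z,w\in K$, and it therefore suffices to bound $k_{\B^q}(\psi(z),\psi(w))$ by a constant multiple of $\|\psi(z)-\psi(w)\|$.

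Next I would localize to a slightly smaller ball. Since $\psi(K)$ is compact in $\B^q$ there is $r\in(0,1)$ with $\|\psi(w)\|\le r$ for every $w\in K$; set $B_r:=\{a\in\C^q:\|a\|\le r\}\subset\B^q$. On $B_r$ the infinitesimal Kobayashi--Royden metric of the ball, which is classical and explicit, is uniformly comparable to the Euclidean norm: from $\kappa_{\B^q}(a;v)=(1-\|a\|^2)^{-1}\sqrt{(1-\|a\|^2)\|v\|^2+|\langle a,v\rangle|^2}$ together with the Cauchy--Schwarz inequality $|\langle a,v\rangle|\le\|a\|\,\|v\|$ one obtains $\kappa_{\B^q}(a;v)\le(1-\|a\|^2)^{-1}\|v\|\le(1-r^2)^{-1}\|v\|$ whenever $\|a\|\le r$. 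As $B_r$ is convex, the straight segment $\gamma(s)=a+s(b-a)$, $s\in[0,1]$, joining any two points $a,b\in B_r$ stays in $B_r$; integrating the infinitesimal metric along $\gamma$ and using that the Royden length of a path dominates the Kobayashi distance of its endpoints yields
$$k_{\B^q}(a,b)\le\int_0^1\kappa_{\B^q}(\gamma(s);b-a)\,ds\le\frac{\|b-a\|}{1-r^2}.$$
Taking $C_K:=(1-r^2)^{-1}$ and applying this with $a=\psi(z)$, $b=\psi(w)$ completes the proof.

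I do not expect a genuine obstacle: the statement is essentially the composition of the distance-decreasing principle with the standard local comparison between $k_{\B^q}$ and the Euclidean distance. The one point deserving care is the inequality $k_{\B^q}(a,b)\le\int_0^1\kappa_{\B^q}(\gamma(s);\dot\gamma(s))\,ds$, that is, that the Kobayashi distance is bounded above by the Royden length of a connecting path; this is the elementary half of Royden's identity and, if one wishes to avoid quoting it, can be proved directly by subdividing $\gamma$ into short arcs, approximating each by an affine analytic disc into $\B^q$, and summing through the triangle inequality. Indeed, for $a,b\in B_r$ the affine map $\zeta\mapsto a+\tfrac{1-r}{\|b-a\|}\,\zeta\,(b-a)$ sends $\D$ into $\B^q$ and carries $\tfrac{\|b-a\|}{1-r}\in\D$ to $b$, so that $k_{\B^q}(a,b)\le k_\D\!\left(0,\tfrac{\|b-a\|}{1-r}\right)$ for points that are close enough, and chaining finitely many such estimates with intermediate points on the segment from $a$ to $b$ gives the bound without recourse to the infinitesimal metric.
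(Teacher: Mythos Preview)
Your argument is correct and is exactly the standard proof: reduce to the ball via the distance-decreasing property and the isometry $\psi$, then use the explicit comparison of $k_{\B^q}$ with the Euclidean distance on a compact subball. Note, however, that the paper does not give its own proof of this lemma; it simply records the statement and refers to \cite{Abate} for a proof, so there is no argument to compare against. Your write-up supplies precisely the kind of elementary verification that the paper omits, and the final paragraph sketching a direct chain-of-discs argument (avoiding the infinitesimal metric) is a nice self-contained alternative.
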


\begin{proposition}\label{loclip}
Let $M$ be a complex manifold.  Then the Kobayashi pseudodistance $$k_M\colon M\times M\to \R^+$$ is locally Lipschitz.

\end{proposition}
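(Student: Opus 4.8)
The plan is to reduce everything to the quadrilateral inequality satisfied by any pseudodistance, combined with the local estimate provided by Lemma~\ref{abate}. Since the property of being locally Lipschitz is to be checked in charts, I would fix an arbitrary point $(p_0,q_0)\in M\times M$ and exhibit a chart around it in which $k_M$ is genuinely Lipschitz near $(p_0,q_0)$; as $(p_0,q_0)$ is arbitrary this yields the claim.

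First I would choose open neighborhoods $U\ni p_0$ and $V\ni q_0$ together with biholomorphisms $\psi\colon U\to\B^q$ and $\chi\colon V\to\B^q$ onto the unit ball. Such neighborhoods exist because every point of a complex manifold has a coordinate neighborhood, inside which one may pick a smaller open set biholomorphic to $\B^q$. I would then fix compact neighborhoods $K\subset U$ of $p_0$ and $L\subset V$ of $q_0$, so that $K\times L$ is a compact neighborhood of $(p_0,q_0)$ and the product map $\psi\times\chi$ serves as a holomorphic chart near $(p_0,q_0)$.

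The key step is the elementary quadrilateral inequality: for the pseudodistance $k_M$ and any points $z,z',w,w'\in M$, two applications of the triangle inequality together with symmetry give
\[
|k_M(z,w)-k_M(z',w')|\leq k_M(z,z')+k_M(w,w').
\]
Applying Lemma~\ref{abate} to the compact sets $K$ and $L$ produces constants $C_K,C_L>0$ with $k_M(z,z')\leq C_K\|\psi(z)-\psi(z')\|$ for $z,z'\in K$ and $k_M(w,w')\leq C_L\|\chi(w)-\chi(w')\|$ for $w,w'\in L$. Combining these with the quadrilateral inequality yields, for all $(z,w),(z',w')\in K\times L$,
\[
|k_M(z,w)-k_M(z',w')|\leq C_K\|\psi(z)-\psi(z')\|+C_L\|\chi(w)-\chi(w')\|.
\]
Since the right-hand side is, up to a constant, the norm distance between the images $(\psi(z),\chi(w))$ and $(\psi(z'),\chi(w'))$ in $\B^q\times\B^q\subset\C^{2q}$, this is exactly Lipschitz continuity of $k_M$ in the chart $\psi\times\chi$ on the neighborhood $K\times L$ of $(p_0,q_0)$.

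I do not anticipate a genuine obstacle here, since all the analytic content is packaged in Lemma~\ref{abate}. The only point requiring a little care is that ``locally Lipschitz on a manifold'' must be read in the product chart $\psi\times\chi$, and the displayed bound is precisely the required estimate there; the existence of ball-charts at every point, and the fact that the quadrilateral inequality holds for a \emph{pseudo}distance (it uses only symmetry and the triangle inequality, not positivity off the diagonal), are both standard.
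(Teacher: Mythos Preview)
Your proof is correct and follows essentially the same approach as the paper: the paper also fixes two ball-charts, takes compact subsets inside them, splits $|k_M(z,w)-k_M(z',w')|$ via the triangle inequality into $k_M(z,z')+k_M(w,w')$, and then applies Lemma~\ref{abate} to each summand. The only differences are notational and expository.
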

\begin{proof} Let $\psi\colon U\to \B^q$ and $\varphi\colon V\to \B^q$   be two  biholomorphic coordinate charts. We show that the mapping $$k_M\circ (\psi^{-1},\varphi^{-1})\colon \B^q\times\B^q\to \R^+$$ is locally Lipschitz. Let $K\subset U$ and $H\subset V$ be two compact subsets.
Let $(z,w),(z',w')$ be in $K\times H$. Then by the triangular inequality and Lemma \ref{abate} we have
\begin{equation*}
\begin{split}
|k_M(z,w)-k_M(z',w')|&=|k_M(z,w)-k_M(z',w)+k_M(z',w)-k_M(z',w')|\\&\leq |k_M(z,w)-k_M(z',w)|+|k_M(z',w)-k_M(z',w')|\\&\leq
k_M(z,z')+k_M(w,w')\leq C_K\|\psi(z)-\psi(z')\|+C_H\|\varphi(w)-\varphi(w')\|,
\end{split}
\end{equation*}
and we are done.
\end{proof}

\begin{definition}
Let $f$ be a real-valued function defined on an interval $I=[t_0,a)$. The (lower) \textit{Dini derivative} is defined as $$\underbar{D}f(t):= \liminf_{h\to 0+}\frac{f(t+h)-f(t)}{h}.$$
Let  $\Omega$ be a domain in $\R^q$, and let $f\colon \Omega\to \R$ be a locally Lipschitz function. If $x\in \Omega$ and  $v\in \R^q$, then the \textit{directional Dini derivative} is defined as
$$\underbar{D}f(x,v):= \liminf_{h\to 0+}\frac{f(x+hv)-f(x)}{h}.$$
\end{definition}

\begin{lemma}
Let  $\Omega$ be a domain in $\R^q$, and let $f\colon \Omega\to \R$ be a locally Lipschitz function. Let $\gamma\colon [0,\varepsilon)\to \Omega$ be a mapping such that $\gamma(0)=x$ and  $\frac{d}{d t}\gamma(0)=v$. Then $$ \underbar{D} f(\gamma (0))= \underbar{D}f(x,v).$$
\end{lemma}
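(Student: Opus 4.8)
The plan is to read $\underbar{D}f(\gamma(0))$ as the lower Dini derivative at $t=0$ of the real function $g(t):=f(\gamma(t))$ on $[0,\varepsilon)$, namely
$$\underbar{D}f(\gamma(0))=\liminf_{h\to 0^+}\frac{f(\gamma(h))-f(\gamma(0))}{h},$$
and then to compare it with $\underbar{D}f(x,v)=\liminf_{h\to 0^+}\frac{f(x+hv)-f(x)}{h}$. The guiding idea is that the curve $\gamma$ and the ray $h\mapsto x+hv$ agree to first order at $0$, so replacing the former by the latter in the difference quotient introduces only an error tending to $0$; local Lipschitz continuity is precisely what converts the first-order agreement of the arguments into a controlled agreement of the values.

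First I would use the hypothesis that $f$ is locally Lipschitz to fix a compact neighborhood $W\subset\Omega$ of $x$ on which $f$ is Lipschitz with some constant $L$. Since $\gamma$ is continuous with $\gamma(0)=x$, for all sufficiently small $h>0$ both $\gamma(h)$ and $x+hv$ lie in $W$, so that a single constant $L$ governs the whole limiting process.

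Next I would exploit the differentiability of $\gamma$ at $0$: from $\gamma(0)=x$ and $\frac{d}{dt}\gamma(0)=v$ I can write $\gamma(h)=x+hv+r(h)$ with $\|r(h)\|/h\to 0$ as $h\to 0^+$. Splitting the difference quotient as
$$\frac{f(\gamma(h))-f(x)}{h}=\frac{f(x+hv)-f(x)}{h}+\frac{f(\gamma(h))-f(x+hv)}{h},$$
the Lipschitz bound gives
$$\left|\frac{f(\gamma(h))-f(x+hv)}{h}\right|\leq\frac{L\,\|\gamma(h)-(x+hv)\|}{h}=\frac{L\,\|r(h)\|}{h}\longrightarrow 0.$$

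Finally I would invoke the elementary fact that perturbing a quantity by a term converging to $0$ leaves its lower limit unchanged: if $a(h)=b(h)+c(h)$ and $c(h)\to 0$, then $\liminf_{h\to 0^+}a(h)=\liminf_{h\to 0^+}b(h)$. Taking $b(h)$ to be the straight-ray difference quotient and $c(h)$ the error term estimated above yields the desired equality. I do not expect a genuine obstacle here; the only delicate point is the order of quantifiers---choosing $W$ and $L$ first and only then shrinking the range of $h$---which is what legitimizes using one Lipschitz constant uniformly as $h\to 0^+$.
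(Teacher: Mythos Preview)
Your proposal is correct and follows essentially the same approach as the paper: both split the difference quotient via $f(\gamma(h))-f(x)=[f(x+hv)-f(x)]+[f(\gamma(h))-f(x+hv)]$, bound the second term by the local Lipschitz constant times $\|\gamma(h)-(x+hv)\|=o(h)$, and then use that adding an $o(1)$ term leaves the $\liminf$ unchanged. Your version is slightly more explicit about fixing the compact neighborhood and the Lipschitz constant $L$ before passing to small $h$, but the argument is the same.
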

\begin{proof}
We claim  that $|f(\gamma(t))-f(x+tv)|=o(t)$. Indeed
  there exist $C>0$ such that $$|f(\gamma(t))-f(x+tv)|\leq C\|\gamma(t)-(x+tv)\|,$$ and the claim follows since  $\frac{d}{d t}\gamma(0)=v$.

Thus
\begin{align}
\liminf_{t\to 0+}\frac{f(\gamma(t))-f(x)}{t}
&=\liminf_{t\to 0+}\frac{ f(\gamma(t))-f(x+tv) + f(x+tv)-f(x)}{t}\nonumber\\
&=\lim_{t\to 0+}\frac{f(\gamma(t))-f(x+tv)}{t}+ \liminf_{t\to 0+} \frac{f(x+tv)-f(x)}{t}\nonumber\\
&= \liminf_{t\to 0+} \frac{f(x+tv)-f(x)}{t}.\nonumber
\end{align}

\end{proof}

If $f:M \to \R$ is  locally Lipschitz  then, outside a set of zero measure (with respect to the Lebesgue measure on $M$), the differential $df_z$ exists and coincides with the Dini partial derivative $\underbar{D}f(z,\cdot)$. Therefore, it is natural to simply denote  $df_z:=\underbar{D}f(z,\cdot)$.

In particular, for what we have seen,  the Dini partial derivative $dk_M$ of the Kobayashi distance is well defined for any complex manifold $M$.

\subsection{Characterization of infinitesimal generators} Now we are in good shape to give the proof of our first result:
\begin{proof}[Proof of Theorem \ref{autonomo}]
a) $\Longrightarrow$ b)
Let  $z\neq w\in M$ and define  $\gamma_z(t):=f_t(z)$ and $\gamma_w(t):= f_t(w)$ for all $t\in [0,\varepsilon)$. Since holomorphic self-mappings of $M$ contract the Kobayashi distance, one has $ k_M(\gamma_z(t),\gamma_w(t))\leq k_M(z,w)$ for all $t>0$, thus taking the liminf of the incremental ratio as $t\to 0^+$ we get $$dk_M((z,w),(H(z),H(w)))\leq 0.$$

b) $\Longrightarrow$ c)
Let $z\in M$ and  let $f_{t}(z)$ be the maximal solution with escaping time $I(z)>0$ which solves the Cauchy problem
\begin{equation}\label{cauchy}
\begin{cases}
\frac{\de}{\de t} f_t(z)= H(f_{t}(z)),\quad t\in [0,I(z))\\
f_0(z)=z.
\end{cases}
\end{equation}
Fix $z\neq w\in M$.
Let $J:= [0,I(z))\cap [0,I(w))$ and define the continuous real valued function $$h(t) := k_M(f_{t}(z), f_{t}(w)).$$ Since $h$ is continuous and $\underbar{D}h(t)\leq 0$ for all $t\in J$, one has that $h$ is non-increasing. If it were $I(z)<I(w)$, since $M$ is complete hyperbolic and $$\{f_t(w)\}_{t\in [0,I(z)]}\subset\subset M,$$ we would have $$+\infty=\limsup_{t\to I(z)}k_M(f_t(z),f_t(w))\leq k_M(z,w)<+\infty,$$ a contradiction. Therefore $I(z)\geq I(w)$, and similarly $I(w)\geq I(z)$.

Since the Cauchy problem is autonomous, this implies that $I(z)=+\infty$ for all $z\in X$, which proves c).

c) $\Longrightarrow$ a) follows from  the holomorphic flow-box theorem (see for example \cite{Hormander}).

\end{proof}

\subsection{The product formula}

For a bounded convex domain of a complex Banach space, the following ``product formula'' is proved in \cite[Theorem 3]{reich-shoikhet} (see also \cite[p. 254]{AMR} for the smooth case).

\begin{proposition}\label{product}
Let $M$ be a complete hyperbolic complex manifold, and let $H$ be an holomorphic vector field on $M$.
Suppose there exists $\lambda>0$ and a family of holomorphic mappings $(f_t\colon M\to M)_{t\in [0,\lambda)}$ converging uniformly on compacta to  $f_0(z)=z$ as $t\to 0+$, and  such that
for all $w\in M$ one has, in a local coordinate chart $w\in W\to \C^q$,
$$\lim_{t\to 0+}\frac{f_t(z)-z}{t}=H(z)$$ uniformly in a neighborhood of $w$.
Then $H$ is an infinitesimal generator and the semigroup $(\varphi_t)$ associated to $H$ satisfies the following ``product formula'': $$\varphi_t=\lim_{m\to\infty} (f_{t/m})^{\circ m},$$  where the limit is uniform on compacta of $M$.
\end{proposition}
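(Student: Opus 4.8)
The first assertion is essentially free. The hypotheses furnish precisely condition a) of Theorem \ref{autonomo}: the family $(f_t)_{t\in[0,\lambda)}$ consists of holomorphic self-maps of $M$ with $f_0=\id_M$ and $\frac{\de}{\de t}f_t(z)|_{t=0}=H(z)$ for every $z$ (the extra uniform-convergence assumptions are more than enough). Hence by the implication a) $\Rightarrow$ c) of that theorem, $H$ is an infinitesimal generator; I denote by $(\v_t)_{t\geq 0}$ the associated semigroup, so that $\v_0=\id_M$, $\frac{\de}{\de t}\v_t=H\circ\v_t$, and crucially $\v_t=\v_{t/m}^{\circ m}$ for every $m$.

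For the product formula, fix $t>0$ and a compact $K\subset M$, and set $s:=t/m$. The strategy is a Lie--Trotter style telescoping comparison between $f_s^{\circ m}$ and $\v_s^{\circ m}=\v_t$, exploiting that every holomorphic self-map of $M$ contracts $k_M$. Writing $w_j:=f_s^{\circ(m-j)}\circ\v_s^{\circ j}(z)$ for $0\le j\le m$, so that $w_0=f_s^{\circ m}(z)$ and $w_m=\v_t(z)$, the triangle inequality together with contraction of $k_M$ under the holomorphic map $f_s^{\circ(m-j-1)}$ gives
$$k_M\bigl(f_s^{\circ m}(z),\v_t(z)\bigr)\le \sum_{j=0}^{m-1} k_M\bigl(f_s(\v_{js}(z)),\,\v_s(\v_{js}(z))\bigr).$$
Since $K':=\v([0,t]\times K)$ is compact by joint continuity of the flow, and each intermediate point $\v_{js}(z)$ lies in $K'$ for $z\in K$ and $0\le j\le m$, the whole sum is dominated by $m\,\sup_{w\in K'} k_M(f_s(w),\v_s(w))$.

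The crux is therefore the uniform first-order estimate $\tfrac1s\sup_{w\in K'} k_M(f_s(w),\v_s(w))\to 0$ as $s\to 0^+$, for then the bound above reads $k_M(f_s^{\circ m}(z),\v_t(z))\le m\cdot o(s)=t\cdot o(s)/s\to 0$ uniformly in $z\in K$; as $k_M$ induces the manifold topology on the complete hyperbolic $M$ and the relevant points stay in $K'$, this yields the claimed uniform-on-compacta convergence $\v_t=\lim_{m} f_{t/m}^{\circ m}$. To prove the estimate I would show that both $f_s$ and $\v_s$ agree with $H$ to first order, uniformly on $K'$: the hypothesis gives $f_s(w)=w+sH(w)+o(s)$ in a chart near each point, while smoothness of the semigroup gives $\v_s(w)=w+sH(w)+o(s)$ with remainder uniform on compacta; subtracting, $f_s(w)-\v_s(w)=o(s)$ in local coordinates. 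Covering $K'$ by finitely many charts and using a Lebesgue-number argument to pick $s$ small enough that $f_s(w)$ and $\v_s(w)$ lie in a common chart, I invoke Lemma \ref{abate} to pass from the Euclidean bound $\|f_s(w)-\v_s(w)\|=o(s)$ to $k_M(f_s(w),\v_s(w))\le C\,\|f_s(w)-\v_s(w)\|=o(s)$, uniformly on $K'$.

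The main obstacle is precisely this uniformization. The hypothesis supplies the first-order expansion of $f_s$ only locally, chart by chart and ``in a neighborhood of each point,'' so the real work is to upgrade it to a single $o(s)$ bound valid simultaneously over the compact set $K'$, and to arrange that $f_s(w)$ and $\v_s(w)$ fall in a common chart uniformly in $w$ for small $s$. Once this uniform estimate is secured, the telescoping collapses the $m$-fold accumulated error into a quantity tending to $0$, and the semigroup identity $\v_s^{\circ m}=\v_t$ closes the argument.
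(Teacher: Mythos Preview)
Your proposal is correct and follows essentially the same strategy as the paper's proof: both invoke Theorem \ref{autonomo} for the first claim, then telescope $k_M\bigl(f_s^{\circ m}(z),\v_t(z)\bigr)$ using the contraction of $k_M$ under holomorphic self-maps to reduce to the uniform first-order comparison $k_M(f_s(w),\v_s(w))=o(s)$ on compacta, which is obtained via Lemma \ref{abate} from the local chart expansions of $f_s$ and $\v_s$. The only organizational difference is that the paper first establishes the formula for small $t$ inside a single coordinate ball and then bootstraps to arbitrary $T$ via the semigroup identity $\v_T=\v_{T/u}^{\circ u}$, whereas you work directly with the compact orbit set $K'=\v([0,t]\times K)$ and a finite chart cover; both arrangements are valid.
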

\begin{proof}
Fix $w\in M$. Let $\psi\colon U\to \mathbb{B}^q$  be a biholomorphic coordinate chart relatively compact in $M$ and centered at $w$, {\sl i.e.},  $\psi(w)=0$. Let  $\frac{1}{2}U:=\psi^{-1}(\frac{1}{2}\mathbb{B}^q).$ In the following, as customary, we identify   $U$ and $\mathbb{B}^q$ via $\psi$ without mentioning it anymore.

By Theorem \ref{autonomo}, the vector field $H$ is an infinitesimal generator. Let  $(\varphi_t)$ be the associated semigroup.  Since $U\subset\subset M$, there exists $C\geq 0$ such that $\|H(z)\|\leq C$ for all $z\in U$. Also, by continuity, there exists $\mu>0$ such that for all $0\leq t<\mu$ the mapping $\varphi_t$ sends $\tfrac{1}{2}U$ in $U$, and thus
\begin{equation}\label{euclidean}
\|\varphi_t(z)-z\|\leq \int_0^t\|H(\varphi_\xi(z))\|d\xi\leq Ct,\quad z\in \tfrac{1}{2}U, \ 0\leq t\leq \mu.
\end{equation}

By Lemma \ref{abate} and \eqref{euclidean} there exists $\mathcal{L}\geq 0$ such that
$$k_M(\varphi_t(z),z)\leq \mathcal{L}t,\quad  \ z\in \tfrac{1}{2}U,\ 0\leq t<\mu.$$

Thus for all $0\leq \tau<\mu$ and all $z\in \frac{1}{2}U$, $\ell\in \N$
$$k_M(\varphi_\tau^{\circ\ell}(z),z)\leq \sum_{j=0}^\ell k_M(\varphi_\tau^{\circ(j+1)}(z),\varphi_\tau^{\circ j}(z))\leq \ell k_M(\varphi_\tau(z),z)\leq \ell \mathcal{L}\tau.$$

Fix $r>0$ and $t>0$ such that the Kobayashi ball $B_M(0, t\mathcal{L}+r)$ of center $0$ and radius $t\mathcal{L}+r$ is contained in  $\frac{1}{2}U$.
 Let $m\in \mathbb{N}$ be such that $t/m\leq \mu$.
Then for all $z\in {B_M(0,r)}$,
$$k_M(\varphi_{t/m}^{\circ \ell}(z),0)\leq k_M(\varphi_{t/m}^{\circ\ell}(z),z)+k_M(z,0)\leq \ell \mathcal{L}(t/m)+r,$$
hence $$\{\varphi_{t/m}^{\circ\ell}(z):0\leq \ell\leq m-1\}\subset B_M(0, t\mathcal{L}+r).$$

By (\ref{euclidean}), the family  $\frac{1}{h}(\varphi_h(z)-z)$ is bounded on  $\frac{1}{2}U$ and thus   converges uniformly to $H(z)$ as $h\to 0+$.
Up to shrinking $U$ if necessary, by hypothesis the same holds  for  $\frac{1}{h}(f_h(z)-z)$. Hence for each $\varepsilon >0$ there exists $\eta>0$ such that
$$k_M(\varphi_s(y),f_s(y))\leq s\varepsilon,\quad y\in \tfrac{1}{2}U,\ 0\leq s\leq \eta.$$

Take $N>0$ so large that $s:=\frac{t}{m}\leq \eta$ for all $m>N$. Then for all $z\in B_M(0,r)$ we have for $m>N$,
\begin{align}
k_M(f_{t/m}^{\circ m}(z),\varphi_t(z))&= k_M(f_{t/m}^{\circ m}(z),\varphi_{t/m}^{\circ m}(z))\nonumber \\
&\leq \sum_{k=1}^m k_M(f_s^{\circ(k-1)}(f_s(\varphi_s^{\circ(m-k)}(z))),f_s^{\circ(k-1)}(\varphi_s(\varphi_s^{\circ(m-k)}(z))))\nonumber\\
&\leq \sum_{k=1}^mk_M(f_s(y_k),\varphi_s(y_k))\leq t\varepsilon, \nonumber
\end{align}
where $y_k:=\varphi_s^{\circ(m-k)}(z)\in B_M(0, t\mathcal{L}+r)\subset\frac{1}{2}U.$
Thus $f_{t/m}^{\circ m}(z)\to \varphi_t(z)$ uniformly on $B_M(0,r)$ for $t$ small.

Fix now $T>0$ and $w\in M$ and consider the compact curve $\Gamma=\{\v_t(w):t\in[0,T]\}$. The previous argument implies that there exists $\delta>0$ and an open neighborhood $Y$ of $\Gamma$ such that  $f_{t/m}^{\circ m}(w)\to \varphi_t(w)$ uniformly on $Y$ for all $0\leq t\leq \delta$. Let $u\in \N$ be so large that $T/u<\delta$. Then for all $0\leq t\leq T$ one has $$\v_t(w)=\v_{t/u}^{\circ u}(w)=\lim_{m\to\infty}f_{\frac{t}{mu}}^{\circ mu}(w).$$
 Fix now $0<p<u$ and consider integers of the form $j=mu+p$, with $m\in \N$. Set $$t_m:= \frac{mu}{mu+p}T .$$ Then $$f_{T/j}^{\circ j}(w)=f_{\frac{t_m}{mu}}^{\circ mu}(f_{\frac{t_m}{mu}}^{\circ p}(w))\to \v_T(w),$$ by uniformity in $t$. This implies
$$ \v_T(w)=\lim_{m\to\infty}f_{\frac{T}{m}}^{\circ m}(w).$$
\end{proof}

For a bounded convex domain of a complex Banach space, the following corollary is proved in \cite[Corollary 4]{reich-shoikhet}, and its proof is an easy consequence of Theorem \ref{autonomo} and Proposition \ref{product}.

\begin{corollary}
Let $M$ be a complete hyperbolic complex manifold.  Let $H_1$ and $H_2$ be two infinitesimal generators on $M$ with associated semigroups $(\varphi_t)$ and $(\psi_t)$, respectively. Then the holomorphic vector field $H_1+H_2$ is an infinitesimal generator and the associated semigroup $(\eta_t)$ satisfies
$$\eta_t=\lim_{m\to+\infty} (\varphi_{t/m}\circ \psi_{t/m})^{\circ m},$$ where the limit is uniform on compacta of $M$.\end{corollary}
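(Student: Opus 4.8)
The plan is to deduce the corollary directly from Theorem~\ref{autonomo} and Proposition~\ref{product}, using the two given semigroups $(\varphi_t)$ and $(\psi_t)$ to build a candidate family of maps to which the product formula applies. Concretely, I would set $f_t := \varphi_t\circ\psi_t$ for $t$ in some interval $[0,\lambda)$ and aim to show that $f_t$ satisfies the hypotheses of Proposition~\ref{product} with infinitesimal velocity $H_1+H_2$. Once this is verified, Proposition~\ref{product} immediately gives that $H_1+H_2$ is an infinitesimal generator with associated semigroup $(\eta_t)$, and that $\eta_t=\lim_{m\to\infty}(f_{t/m})^{\circ m}=\lim_{m\to\infty}(\varphi_{t/m}\circ\psi_{t/m})^{\circ m}$ uniformly on compacta, which is exactly the desired formula.

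The first step is to check that $f_t=\varphi_t\circ\psi_t$ converges uniformly on compacta to the identity as $t\to 0^+$; this is routine since each of $(\varphi_t)$ and $(\psi_t)$ is a continuous semigroup with $\varphi_0=\psi_0=\id_M$, and composition preserves local uniform convergence to the identity (using that the maps are holomorphic self-maps of a complete hyperbolic manifold, hence locally equicontinuous). The second and essential step is the velocity computation: I would fix $w\in M$, pass to a relatively compact coordinate chart $w\in W\to\C^q$, and estimate in the Euclidean norm
\begin{equation*}
\frac{f_t(z)-z}{t}=\frac{\varphi_t(\psi_t(z))-\psi_t(z)}{t}+\frac{\psi_t(z)-z}{t}.
\end{equation*}
The second term tends to $H_2(z)$ uniformly near $w$ by definition of the infinitesimal generator $H_2$ (using the bound \eqref{euclidean} applied to $(\psi_t)$ to get uniformity). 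For the first term, since $\psi_t(z)\to z$ uniformly and $\frac{1}{t}(\varphi_t(y)-y)\to H_1(y)$ uniformly in $y$ near $w$ (again via the Euclidean bound from \eqref{euclidean} for $(\varphi_t)$, which makes the incremental ratios equibounded and hence uniformly convergent), one concludes that this term tends to $H_1(z)$ uniformly near $w$. Summing gives $\lim_{t\to0^+}\frac{f_t(z)-z}{t}=H_1(z)+H_2(z)$ uniformly in a neighborhood of $w$, as required.

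The main obstacle I anticipate is the uniformity in the first term, namely controlling $\frac{1}{t}(\varphi_t(\psi_t(z))-\psi_t(z))$: one must ensure that the argument $\psi_t(z)$ stays in a fixed compact neighborhood of $w$ for small $t$ (so that the uniform convergence of $\frac{1}{t}(\varphi_t(y)-y)$ to $H_1(y)$ over that neighborhood can be invoked) and simultaneously that the difference between evaluating at $\psi_t(z)$ and at $z$ does not spoil the limit. The clean way to handle this is to exploit that $H_1$ is continuous and that, by \eqref{euclidean} applied to $(\varphi_t)$, the family $\frac{1}{t}(\varphi_t(\cdot)-\cdot)$ is uniformly bounded and converges \emph{uniformly} to $H_1$ on a neighborhood of $w$; combined with $\psi_t(z)\to z$ uniformly, a standard $\varepsilon/3$ estimate closes the gap. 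With the hypotheses of Proposition~\ref{product} thus verified, the conclusion follows directly, and no further work is needed beyond invoking that proposition.
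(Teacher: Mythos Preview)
Your proposal is correct and is precisely the argument the paper has in mind: the authors only state that the corollary ``is an easy consequence of Theorem~\ref{autonomo} and Proposition~\ref{product}'', and the natural way to unpack this is exactly to set $f_t:=\varphi_t\circ\psi_t$, verify via the decomposition
\[
\frac{f_t(z)-z}{t}=\frac{\varphi_t(\psi_t(z))-\psi_t(z)}{t}+\frac{\psi_t(z)-z}{t}
\]
that $f_t$ has infinitesimal velocity $H_1+H_2$ locally uniformly, and then invoke Proposition~\ref{product}. Your handling of the only delicate point---the uniform convergence of the first summand, via the equiboundedness coming from \eqref{euclidean} and the continuity of $H_1$---is the standard and correct one.
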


We have also:

\begin{corollary}\label{cono}
Let $M$ be a complete hyperbolic complex manifold. The set of infinitesimal generators on $M$ form a closed convex cone with vertex in $0$.
\end{corollary}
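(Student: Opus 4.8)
The plan is to check the three defining features of a closed convex cone with vertex at $0$ --- invariance under multiplication by non-negative scalars, invariance under addition, and closedness --- relying throughout on the characterization of Theorem \ref{autonomo}: a holomorphic vector field $H$ is an infinitesimal generator precisely when $(dk_M)_{(z,w)}(H(z),H(w))\leq 0$ for all $z\neq w$.

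For the cone property, observe first that $H\equiv 0$ is trivially a generator. For $\lambda>0$ I would use that, at a fixed base point, the directional Dini derivative is positively homogeneous of degree one in the direction: substituting $s=\lambda h$ in the incremental ratio of $k_M$ gives $(dk_M)_{(z,w)}(\lambda v)=\lambda\,(dk_M)_{(z,w)}(v)$. Taking $v=(H(z),H(w))$ yields $(dk_M)_{(z,w)}(\lambda H(z),\lambda H(w))=\lambda\,(dk_M)_{(z,w)}(H(z),H(w))\leq 0$, so $\lambda H$ is again a generator. Invariance under addition is exactly the content of the corollary preceding this statement. Since a subset of a vector space that is stable under both addition and non-negative scaling is automatically convex, these two facts already show that the set of generators is a convex cone with vertex at $0$.

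The delicate point is closedness, in the natural topology of uniform convergence on compact subsets. Let $(H_n)$ be generators converging uniformly on compacta to a holomorphic vector field $H$ (holomorphy of the limit is the standard Montel/Weierstrass fact). Fix $z\neq w$ and pass to relatively compact coordinate charts around $z$ and $w$; on a compact neighbourhood of $(z,w)$ the distance $k_M$ is $L$-Lipschitz by Proposition \ref{loclip}. The key remark is that, for a fixed base point, the map $v\mapsto (dk_M)_{(z,w)}(v)$ is itself $L$-Lipschitz: for directions $v,v'$ and all sufficiently small $h>0$ the two incremental ratios differ by at most $L\|v-v'\|$ uniformly in $h$, and since $|\liminf_h a_h-\liminf_h b_h|\leq\sup_h|a_h-b_h|$ this bound passes to the Dini derivatives. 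As $H_n(z)\to H(z)$ and $H_n(w)\to H(w)$, the directions converge and hence $(dk_M)_{(z,w)}(H(z),H(w))=\lim_n (dk_M)_{(z,w)}(H_n(z),H_n(w))\leq 0$. Since $z\neq w$ was arbitrary, $H$ is a generator by Theorem \ref{autonomo}.

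I expect closedness to be the only real obstacle. The defining inequality is phrased through a liminf, which does not in general survive passage to the limit, so one cannot simply commute $\lim_n$ with the Dini derivative. What saves the argument is the uniform-in-$h$ Lipschitz control of the incremental ratios in the direction variable, coming from the local Lipschitz continuity of $k_M$ (Proposition \ref{loclip}); this renders the Dini derivative continuous in the direction and legitimises the limit. The only additional care is to ensure the Lipschitz constant can be taken uniform on the relevant compact neighbourhood, which Proposition \ref{loclip} provides.
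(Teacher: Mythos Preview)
Your proof is correct and follows the route the paper implicitly intends: the corollary is stated without proof, immediately after Theorem~\ref{autonomo} and the additivity corollary, so the expected argument is precisely to verify cone, convexity and closedness via characterization~(b). Your handling of closedness---observing that the local Lipschitz bound on $k_M$ from Proposition~\ref{loclip} makes the incremental ratios, and hence the Dini directional derivative, Lipschitz (in particular continuous) in the direction variable---is the right way to fill in the one nontrivial detail the paper leaves to the reader.
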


\section{The Loewner equation on complete hyperbolic manifolds}

\begin{proposition}\label{A1}
Let $M$ be a complete hyperbolic manifold. Let $d\geq 1 $ and let $G(z,t)$ be a Herglotz vector field of order $d$. Then there exists a unique evolution family $(\varphi_{s,t})$ of order $d$ which solves the Cauchy problem \eqref{L-ODE}.
\end{proposition}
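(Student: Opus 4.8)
The plan is to solve the non-autonomous Loewner ODE \eqref{L-ODE} pointwise in the initial data by Carathéodory's theory, to establish global existence in time by an a priori Kobayashi-distance bound built on Theorem \ref{autonomo}, and finally to package the resulting flow into an evolution family of the correct order. First I would fix $s\geq 0$ and $z\in M$ and read \eqref{L-ODE} as the Cauchy problem $\dot w(t)=G(w(t),t)$, $w(s)=z$, in a local chart. Since $G$ satisfies WHVF1--WHVF3, the right-hand side is measurable in $t$, holomorphic (hence locally Lipschitz) in $z$, and locally dominated by a function $c_{K,T}\in L^d([0,T])\subset L^1([0,T])$; Carathéodory's existence and uniqueness theorem therefore yields a unique absolutely continuous maximal solution $w_{s,z}\colon[s,\tau_{s,z})\to M$ (abbreviate $w=w_{s,z}$). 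By WHVF2 and holomorphic dependence of Carathéodory solutions on the (complex) initial condition, the maps $\v_{s,t}(z):=w_{s,z}(t)$ are holomorphic in $z$ wherever defined; this regularity point is standard but should be invoked with care.

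The heart of the proof is to show $\tau_{s,z}=+\infty$, and here Theorem \ref{autonomo} replaces the $C^1$-regularity of $k_M$ used previously. Fix a base point $q\in M$ and consider $t\mapsto k_M(w(t),q)$, which is locally absolutely continuous. For a.e. $t$ the vector field $G(\cdot,t)$ is an infinitesimal generator, so by Theorem \ref{autonomo} it generates a semigroup $(\phi^t_\sigma)_{\sigma\geq 0}$ of holomorphic self-maps of $M$ contracting $k_M$, with $\tfrac{\de}{\de\sigma}\phi^t_\sigma(x)|_{\sigma=0}=G(x,t)$. Since the curve $\sigma\mapsto\phi^t_\sigma(w(t))$ and the solution itself share the velocity $G(w(t),t)$ at $w(t)$, the Lemma on Dini derivatives of the previous section identifies their Dini derivatives, so that for a.e. $t$
\[
\underline{D}\,[k_M(w(t),q)]=\liminf_{\sigma\to 0^+}\frac{k_M(\phi^t_\sigma(w(t)),q)-k_M(w(t),q)}{\sigma}.
\]
Using the triangle inequality together with the contraction $k_M(\phi^t_\sigma(w(t)),\phi^t_\sigma(q))\leq k_M(w(t),q)$ one bounds the numerator by $k_M(\phi^t_\sigma(q),q)$, and Lemma \ref{abate} applied in a chart around $q$ gives $k_M(\phi^t_\sigma(q),q)\leq C_q\|\phi^t_\sigma(q)-q\|=C_q\sigma\|G(q,t)\|+o(\sigma)$. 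Hence $\underline{D}\,[k_M(w(t),q)]\leq C_q\|G(q,t)\|\leq C_q\,c_{\{q\},T}(t)$, a bound in $L^1$ that is \emph{independent of the particular solution}. Integrating, $k_M(w(t),q)$ stays bounded on every $[s,T]$, so $w(t)$ remains in a closed Kobayashi ball, which is compact since $M$ is complete hyperbolic; therefore the solution cannot escape in finite time and $\tau_{s,z}=+\infty$.

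It then remains to verify the evolution-family axioms for $\v_{s,t}(z):=w_{s,z}(t)$. Property EF1 is immediate, EF2 follows from uniqueness of solutions of \eqref{L-ODE}, and the contraction $k_M(\v_{s,t}(z),\v_{s,t}(z'))\leq k_M(z,z')$---yielding continuity and, with the holomorphy noted above, that each $\v_{s,t}$ is a holomorphic self-map---comes from Theorem \ref{autonomo}(b) via the Dini-derivative argument showing $t\mapsto k_M(w_{s,z}(t),w_{s,z'}(t))$ is non-increasing. For EF3, the a priori bound confines $\{\v_{s,t}(z):t\in[s,T],\,z\in K\}$ to a fixed compact set $K'$, so by WHVF3 and $\dot w=G(w,\cdot)$ one gets $d_M(\v_{s,u}(z),\v_{s,t}(z))\leq\int_u^t\|G(w_{s,z}(\xi),\xi)\|\,d\xi\leq\int_u^t c_{K',T}(\xi)\,d\xi$ with $c_{K',T}\in L^d$, as required. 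Uniqueness of the evolution family again reduces to uniqueness of the ODE.

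The main obstacle is precisely the global-existence step: without $C^1$-regularity of $k_M$ one cannot differentiate the distance directly, and the frozen-semigroup comparison above---legitimate exactly because of Theorem \ref{autonomo}---is what makes the a priori estimate go through. A secondary technical point requiring care is the holomorphic dependence of $\v_{s,t}$ on $z$ in the Carathéodory setting.
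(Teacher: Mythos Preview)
Your proof is correct and differs from the paper's chiefly in the global-existence step. The paper compares \emph{two} solutions $\v_{s,t}(z)$ and $\v_{s,t}(w)$: since $k_M$ is locally Lipschitz, $h(t)=k_M(\v_{s,t}(z),\v_{s,t}(w))$ is locally absolutely continuous, and Theorem~\ref{autonomo}(b) applied directly gives $\underbar{D}h(t)\le 0$ a.e., whence all escaping times $I(s,z)$ agree; the remainder---including the conclusion that this common escaping time is $+\infty$ and the verification of EF1--EF3---is then deferred to Steps 2--6 of \cite[Proposition~1]{Bracci-Contreras-Diaz-II}. You instead fix a base point $q$ and compare the single solution $w(t)$ to $q$ via the frozen semigroup $(\phi^t_\sigma)$ supplied by Theorem~\ref{autonomo}(c): the Dini-derivative lemma identifies the Dini derivative of $t\mapsto k_M(w(t),q)$ with that along $\sigma\mapsto\phi^t_\sigma(w(t))$, and the semigroup contraction together with Lemma~\ref{abate} yields the integrable bound $\underbar{D}[k_M(w(t),q)]\le C_q\,c_{\{q\},T}(t)$. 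This furnishes an explicit a~priori confinement estimate, independent of the particular solution, from which $\tau_{s,z}=+\infty$ follows immediately and which also feeds directly into your verification of EF3; your argument is therefore more self-contained, while the paper's is shorter on the page at the price of invoking the earlier reference.
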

\begin{proof}

For $z\in M$ and $s\in \mathbb{R}^+$   let $\varphi_{s,t}(z)$ be the maximal solution with escaping time $I(s,z)>0$ which solves the Cauchy problem \eqref{L-ODE}.
Fix $z\neq w\in M$.
Let $$J:= [s,I(s,z))\cap [s,I(s,w))$$ and let  $$h(t) := k_M(\varphi_{s,t}(z), \varphi_{s,t}(w)).$$
The function $h(t)$ is locally absolutely continuous since by Lemma \ref{loclip} the Kobayashi distance $k_M$ is locally Lipschitz. Thus $h(t)$ is  differentiable for a.e. $t\in J$. By Theorem \ref{autonomo} for a.e. $t\in \mathbb{R}^+$ the holomorphic vector field $G(z,t)$ satisfies b), thus $\underbar{D}h(t)\leq 0$ for a.e. $t\in J$, which implies $$\frac{d}{dt}h(t)\leq 0,\quad \mbox{for a.e.}\ t\in J,$$ hence $h$ is non-increasing. If $I(s,z)<I(s,w)$, since $M$ is complete hyperbolic and $$\{\varphi_{s,t}(w)\}_{t\in [s,I(s,z)]}\subset\subset M,$$ we have $$+\infty=\limsup_{t\to I(s,z)}k_M(\varphi_{s,t}(z),\varphi_{s,t}(w))\leq k_M(z,w)<+\infty,$$ a contradiction. Then $I(s,z)\geq I(s,w)$, and similarly $I(s,w)\geq I(s,z)$.

The proof proceeds now as in Steps 2-6  of \cite[Proposition 1]{Bracci-Contreras-Diaz-II}.
\end{proof}

\begin{proposition}\label{A3}
Let $M$ be  a complete hyperbolic  manifold of dimension $q\geq 1$. Then for any evolution family $(\varphi_{s,t})$ of order $d\geq 1$ in $M$ there exists a Herglotz vector field of order $d\geq 1$ which satisfies \eqref{L-ODE}. Moreover, if $G'(z,t)$ is another weak holomorphic vector field which satisfies \eqref{L-ODE} then $G(z,t)=G'(z,t)$ for all $z\in M$ and a.e. $t\geq 0$.
\end{proposition}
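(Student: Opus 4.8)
The plan is to construct the Herglotz vector field as the ``diagonal'' time derivative of the evolution family and to invoke Theorem \ref{autonomo} to certify that it is an infinitesimal generator for almost every $t$, thereby replacing the $C^1$-regularity of $k_M$ assumed in \cite{Bracci-Contreras-Diaz-II} by the locally Lipschitz regularity and Dini calculus developed above. I work in a fixed relatively compact holomorphic chart $\psi\colon U\to\B^q$ and fix a compact $K\subset U$ and a horizon $T>0$. Since on $U$ the integrated distance $d_M$ is bi-Lipschitz equivalent to the Euclidean one (in particular $d_M$ dominates a constant times the Euclidean distance for nearby points), condition EF3 yields, in coordinates, $\|\v_{s,u}(z)-\v_{s,t}(z)\|\leq C\int_u^t k_{K,T}(\xi)\,d\xi$ as long as the trajectory stays in $U$. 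Hence for each fixed $s$ and $z$ the curve $t\mapsto\v_{s,t}(z)$ is locally absolutely continuous, so $\partial_t\v_{s,t}(z)$ exists for a.e.\ $t$ and is bounded by $Ck_{K,T}(t)$, with $k_{K,T}\in L^d$.

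First I would set, in coordinates and for $h>0$, the difference quotient $Q_h(w,t):=\tfrac1h(\v_{t,t+h}(w)-w)$ and show that for a.e.\ $t$ the limit $G(w,t):=\lim_{h\to0^+}Q_h(w,t)$ exists, locally uniformly in $w$, and is a holomorphic vector field. The key is the cocycle identity EF2: writing $w=\v_{u,t}(z)$ with $u<t$ one has $\v_{t,t+h}(w)=\v_{u,t+h}(z)$, so $Q_h(w,t)=\tfrac1h(\v_{u,t+h}(z)-\v_{u,t}(z))\to\partial_t\v_{u,t}(z)$ at every $t$ where $t\mapsto\v_{u,t}(z)$ is differentiable. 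Running this over a countable dense set of initial points $z_j\in M$ and over rational $u<t$, I obtain a full-measure set of times $t$ at which $Q_h(\cdot,t)$ converges on $\{\v_{u,t}(z_j)\}_j$, a dense subset of $\v_{u,t}(M)$; choosing $u$ close to $t$ the map $\v_{u,t}$ is near the identity, so $\v_{u,t}(M)$ has nonempty interior. At Lebesgue points of $k_{K,T}$ the family $\{Q_h(\cdot,t)\}_h$ is locally uniformly bounded, hence normal by Montel's theorem, and Vitali's theorem upgrades convergence on a dense subset of a nonempty open set to locally uniform convergence on the connected manifold $M$. This gives WHVF2, while WHVF1 follows from measurability of the quotients in $t$ and WHVF3, of the same order $d$, from letting $h\to0^+$ in $\|Q_h(w,t)\|\leq Ck_{K,T}(t)$.

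To see that $G(\cdot,t)$ is an infinitesimal generator for a.e.\ $t$ I would check condition b) of Theorem \ref{autonomo}. Fix $z\neq w$; since each $\v_{t,t+h}$ is a holomorphic self-map it contracts the Kobayashi distance, so $k_M(\v_{t,t+h}(z),\v_{t,t+h}(w))\leq k_M(z,w)$, and dividing by $h$ and taking $\liminf_{h\to0^+}$ gives, through the Dini chain rule established above (the lemma identifying $\underbar{D}f(\gamma(0))$ with the directional Dini derivative in the direction $\gamma'(0)$), the inequality $(dk_M)_{(z,w)}(G(z,t),G(w,t))\leq0$. By Theorem \ref{autonomo} this makes $G(\cdot,t)$ a generator, so $G$ is a Herglotz vector field of order $d$. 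The Loewner equation \eqref{L-ODE} is then immediate from the construction, since $\partial_t\v_{s,t}(z)=\lim_{h\to0^+}Q_h(\v_{s,t}(z),t)=G(\v_{s,t}(z),t)$ for a.e.\ $t$.

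For uniqueness, suppose $G'$ is any weak holomorphic vector field satisfying \eqref{L-ODE}. Then for every $s$ and a.e.\ $t\geq s$ one has $G(\v_{s,t}(z),t)=\partial_t\v_{s,t}(z)=G'(\v_{s,t}(z),t)$ for all $z$, so restricting to rational $s$ yields a full-measure set of times $t$ on which $G(\cdot,t)$ and $G'(\cdot,t)$ agree on $\v_{s,t}(M)$ for every rational $s<t$. For $s$ close to $t$ the image $\v_{s,t}(M)$ contains an open set, and since $G(\cdot,t)$ and $G'(\cdot,t)$ are holomorphic the identity principle forces $G(\cdot,t)=G'(\cdot,t)$ on all of $M$, for a.e.\ $t$. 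I expect the genuine obstacle to be the almost-everywhere existence of the diagonal limit $G(w,t)$ valid simultaneously for all $w$: differentiability of $t\mapsto\v_{s,t}(z)$ is only available off an $(s,z)$-dependent null set, and the delicate point is to transfer this, through the cocycle identity together with the normal-families argument, to a single null set of times that works uniformly in $w$.
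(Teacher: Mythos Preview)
Your approach is correct but genuinely different from the paper's. The paper never proves that the diagonal limit $\lim_{h\to 0^+}\tfrac{1}{h}(\varphi_{t,t+h}(w)-w)$ exists. Instead it takes only the discrete quotients $G_{n,s}(z):=n(\varphi_{s,s+1/n}(z)-z)$, bounds them on a chart via the Hardy--Littlewood maximal function of $k_T$ (this is how it produces a single null set of times independent of $z$), and then defines $G(\cdot,s)$ as a \emph{measurable selector} from the multifunction $s\mapsto\{\text{accumulation points of }(G_{n,s})_n\text{ in }{\sf Hol}(U',\C^q)\}$, invoking the Castaing--Valadier selection theorem. No convergence of the full family is ever asserted; the paper then refers to steps 6'--8' of \cite{Bracci-Contreras-Diaz-II} to verify \eqref{L-ODE} and to globalize from the chart, and it uses part a) of Theorem \ref{autonomo} (not b)) to certify the generator property. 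Your route---absolute continuity of $t\mapsto\varphi_{u,t}(z_j)$ for countably many $(u,z_j)$, together with Vitali on a normal family to propagate pointwise convergence from a set dense in $\varphi_{u,t}(M)$---has the virtue of producing the honest limit and dispensing with the measurable-selection machinery; the paper's route is more robust in that it sidesteps entirely the ``single null set'' issue you correctly identify as the delicate point. One small correction: the uniform bound should read $\|Q_h(w,t)\|\leq C\cdot\tfrac{1}{h}\int_t^{t+h}k_{K,T}(\xi)\,d\xi$, not $\leq Ck_{K,T}(t)$; the pointwise estimate $\|G(w,t)\|\leq Ck_{K,T}(t)$ then follows at Lebesgue points after letting $h\to 0^+$, which is what WHVF3 requires.
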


\begin{proof}
The proof is similar to that of \cite[Proposition 2]{Bracci-Contreras-Diaz-II} and we only describe the main differences. Let $U$ be a chart in $M$, let $U'\subset\subset U$ be an open set, and let $T>0$. There exists $n(T,U')>0$ such that for all $n\in \N, n\geq n(T,U')$, $t\in [0,T]$ it holds
\[
\v_{t,t+\frac{1}{n}}(U')\subset U.
\]
We define locally in $U'$,
\[
G_{n,s}(z):=n(\varphi_{s,s+\frac{1}{n}}(z)-z),\quad t\in [0,T],\ n\geq n(U',T).
\]

Let $k_T:=k_{\overline{U'},T+1}\in
L^{d}([0,T+1],\mathbb{R})$ be the non-negative function given
by EF3. We extend $k_T$ to all of $\R$ by setting zero outside
the interval $[0,T+1]$. Then for $0\leq s\leq T$ and every
$n\in\mathbb{N}$, $n\geq n(T,U')$ and all $z\in U'$,
\begin{equation*}
n\|
\varphi_{s,s+\frac{1}{n}}(z) -z\|=n\|
\varphi_{s,s+\frac{1}{n}}(z) -\varphi_{s,s}(z)\| \leq
n\int_{s}^{s+1/n}k_T(\xi)d\xi\leq {\sf Max}_{k_T}(s),
\end{equation*}
where
\[
{\sf Max}_{k_T}(s):=\sup\left\{
\frac{1}{|I|}\int_{I}k_T(\xi)d\xi:\text{ }I\text{ is a closed
interval of the real line and }s\in I\right\}
\]
is the so-called maximal function associated to $k_T$. Since
 $k\in L^{1}(\mathbb{R},\mathbb{R})$, by
Hardy-Littlewood maximal theorem there exists a subset
$N(T)\subset\lbrack0,+\infty)$ of zero measure such that
${\sf Max}_{k_T}(s)<+\infty$ for every $s\in\lbrack0,T]\setminus
N(T).$ Let
\[
N:=\bigcup_{T\in \N}N(T).
\]
Note that $N$ is a set of measure zero in $[0,+\infty)$. Fix $s\in [0,T]\setminus N$. Then for all  $T\in \N$ there exists $C(T,U',s)>0$ such that for all  $z\in U'$ and $n\geq n(T,U')$
\begin{equation}\label{claima}
\sup_{n}\| G_{n,s}(z)\|
\leq C(T,U',s).
\end{equation}
Let
\[
\Gamma:[0,+\infty)\rightarrow 2^{{\sf Hol}(U',\C^q)},\text{
}s\mapsto\Gamma(s)=\left\{
\begin{array}
[c]{ll}%
{\sf ac}(G_{n,s}) & s\notin N,\\
\{0\} & s\in N,
\end{array}
\right.
\]
where ${\sf ac}(G_{n,s})$ denotes the accumulation points of the
sequence $\{G_{n,s}\}$ in the metric space
${\sf Hol}(U',\C^q)$. Note that, since ${\sf Hol}(U',\C^q)$
 is a metric space, $\Gamma(s)$ is a closed subset of
${\sf Hol}(U',\C^q)$ for every $s\geq 0$. By \eqref{claima} for every fixed $s\in [0,+\infty)\setminus N$, the family $\{G_{n,s}\}_{n}$ is uniformly
bounded, thus it has accumulation
points in ${\sf Hol}(U',\C^q)$, so that
$\Gamma(s)$ is not empty for any $s\geq 0$.

Now we are going to apply the following result:

\begin{theorem}
\label{seleccion medible} \cite[Theorem III.30, page 80]{Castaing-Valadier}
Let $(\Omega,\Sigma,\mu)\,\,$be a positive $\sigma$-finite complete measure
space, $[X,d]$\ a separable and complete metric space and $\Gamma$\ a
multifunction from $\Omega$\ to the subsets of $X.$\ Assume that:

(i) For every $\omega\in\Omega,$\ $\Gamma(\omega)$\ is a closed
non-empty subset of $X$.

(ii) For every $x\in X$\ and every $r>0,$\
$\{\omega\in\Omega:\Gamma (\omega)\cap
B(x,r)\neq\emptyset\}\in\Sigma.$\ (As usual, $B(x,r)$\ denotes
the open unit ball in $X$\ with center $x$\ and radius $r).$

Then $\Gamma$\ admits a measurable selector
$\sigma:\Omega\longrightarrow X$; namely, for every
$\omega\in\Omega,$\ we have $\sigma(\omega)\in \Gamma(\omega)$\
and the inverse image by $\sigma$\ of any borelian in $X$\
belongs to $\Sigma.$
\end{theorem}

We already saw that $\Gamma$ satisfies hypothesis
(i) of Theorem~\ref{seleccion medible}.

In order to check condition (ii) in Theorem \ref{seleccion
medible} for $\Gamma$, one can argue similar to the proof of \cite[Theorem 6.2]{BCD1}, thus we omit such details.

Therefore, the multifunction $\Gamma$ satisfies the hypotheses
of Theorem \ref{seleccion medible}.  Thus there exists a
measurable selector $\sigma
:[0,+\infty)\rightarrow {\sf Hol}(U', \C^q)$ for
$\Gamma$. We define
$G:U'\times\lbrack0,+\infty)\rightarrow\C^q $ by
\[
G(z,s):=\sigma\lbrack s](z),
\]  for $z\in U'$  and
$s\in [0, +\infty)$. Hence, for every $s\in\lbrack0,+\infty)\setminus N$
there exists a strictly increasing sequence $\{n_{k}(s)\}\subset \N$  such that, for all $z\in U',$
\begin{equation*}
G(z,s):=\lim_{k\to \infty}G_{n_{k}(s),s}(z),
\end{equation*}
and the convergence is uniform on $U'$. Then, by construction, $G(z,s)$ is a weak holomorphic vector field on $U'$. Moreover, for any $s\in [0,T+1]\setminus N$ and for all $z\in U'$ it follows
\[
\|G_{n_k(s),s} \|\leq
n_k(s)\int_{s}^{s+1/n_k(s)}k_T(\xi)d\xi.
\]
Passing to the limit for $k\to \infty$, we obtain that for almost all $s\in [0,T+1]$ it holds
\[
\|G(z,s) \|\leq k_T(s),
\]
proving that $G$ has order $d$.

Now we can argue as in the proof of \cite[Proposition 2]{Bracci-Contreras-Diaz-II}, see the steps from 6' to 8' pp. 959--960, and we see that $G(z,t)$ is a weak holomorphic vector field of order $d$ which satisfies \eqref{L-ODE}. It is then a Herglotz vector field from $a)$ of Theorem \ref{autonomo}.
\end{proof}

\section{Open problems}

\subsection{On the definition of evolution families} Let $(\v_{s,t})$ with $0\leq s\leq t<+\infty$ be a family of holomorphic self-maps of a complete hyperbolic manifold $M$ with satisfies EF1 and EF2 of Definition \ref{def-ev}.

\medskip

{\bf Question 1}: Are there simple conditions which guarantee that EF3 holds?

\medskip

In other words, what (if any) are the simplest conditions that imply that $(\v_{s,t})$ is an evolution family? For instance, in \cite{BCD1} it is proved that a family $(\v_{s,t})_{0\leq s\leq t}$ of holomorphic self-maps of the unit disc $\D$ satisfying EF1, EF2 and
\begin{enumerate}
\item[EF3'.] for any  $z\in \D$ and for any $T>0$ there exists a
non-negative function $k_{z,T}\in L^{d}([0,T],\mathbb{R})$
such that for all $0\leq s\leq u\leq t\leq T$
\[
d_M(\varphi_{s,u}(z),\varphi_{s,t}(z))\leq\int_{u}^{t}k_{z,T}(\xi)d\xi.
\]
\end{enumerate}
solves \eqref{L-ODE} for a given $L^d$-Herglotz vector field $G$ on $\D$. Hence, {\sl a posteriori}, from Theorem \ref{main}, it follows that $(\v_{s,t})$ is a $L^d$-evolution family in the sense of Definition \ref{def-ev} (in fact, this can be proved directly using distortion theorems).

If $M=D\subset\subset \C^q$ is a bounded convex domain, a simpler condition can be stated as follows:

\begin{proposition}\label{convexLd}
Let $D\subset\subset \C^q$ be a bounded balanced convex domain, $d\in [1,+\infty]$. Let $(\v_{s,t})_{0\leq s\leq t}$ be a family of holomorphic self-maps of $D$ satisfying EF1 and EF2. Then $(\v_{s,t})$ is an $L^d$-evolution family if and only if for any $T>0$ there exists  a non-negative function $k_T\in L^{d}([0,T],\mathbb{R})$
such that for all $0\leq s\leq t\leq T$
\begin{equation}\label{cond1}
\|\v_{s,t}(0)\|+\|d(\v_{s,t})_0-{\sf id}\|\leq\int_{s}^{t}k_T(\xi)d\xi.
\end{equation}
\end{proposition}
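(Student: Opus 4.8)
The plan is to prove the two implications separately, the forward one being routine and the converse carrying all the difficulty. Throughout I would work with the Kobayashi distance $k_D$: since $D$ is bounded and convex it is complete hyperbolic, so $k_D$ is complete and, on every compact subset of $D$, bi-Lipschitz equivalent both to the Euclidean norm $\|\cdot\|$ and to the integrated distance $d_M$. Because completeness forces the orbits appearing in EF3 to remain in compact subsets of $D$, the notion of $L^d$-evolution family is insensitive to which of these comparable distances one uses, and I may pass freely between $d_M$, $\|\cdot\|$ and $k_D$ on the relevant compacta.

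First I would show that an $L^d$-evolution family satisfies \eqref{cond1}. Applying EF3 with the compact set $K=\{0\}$ gives $k_D(0,\varphi_{s,t}(0))\le\int_s^t k_{\{0\},T}$; since $\varphi_{s,t}(0)$ then lies in the compact $k_D$-ball $\overline{B_{k_D}(0,\int_0^T k_{\{0\},T})}$, the comparison with $\|\cdot\|$ yields $\|\varphi_{s,t}(0)\|\le C\int_s^t k_{\{0\},T}$. Applying EF3 instead with $K=\overline{\tfrac12 D}$ (the ball $\{\mu_D\le\tfrac12\}$ for the Minkowski gauge $\mu_D$) and $u=s$ gives, after the same conversion, $\sup_{z\in K}\|\varphi_{s,t}(z)-z\|\le C'\int_s^t k_{K,T}$. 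A Cauchy estimate on a smaller ball then controls the first derivative of $z\mapsto\varphi_{s,t}(z)-z$ at $0$, namely $\|d(\varphi_{s,t})_0-\id\|\le C''\int_s^t k_{K,T}$. Summing the two bounds and setting $k_T:=(C+C'')\,k_{K,T}\in L^d$ gives \eqref{cond1}.

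For the converse I would use the semigroup law $\varphi_{s,t}=\varphi_{u,t}\circ\varphi_{s,u}$ of EF2 to write, for $z$ in a compact $K$ and $w:=\varphi_{s,u}(z)$,
\[
d_M(\varphi_{s,u}(z),\varphi_{s,t}(z))=d_M\big(w,\varphi_{u,t}(w)\big),
\]
so that EF3 reduces to bounding $d_M(w,\varphi_{u,t}(w))$ by $\int_u^t k_{K,T}$ as $w$ ranges over a compact set. The key input is a quantitative Cartan-type \emph{distortion estimate}: for every $K'\subset\subset D$ there is $C_{K'}>0$ such that every holomorphic self-map $\psi$ of $D$ sufficiently close to the identity at $0$ satisfies $\sup_{w\in K'}\|\psi(w)-w\|\le C_{K'}(\|\psi(0)\|+\|d\psi_0-\id\|)$. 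Granting this for $\psi=\varphi_{u,t}$ and using \eqref{cond1} gives $\sup_{K'}\|\varphi_{u,t}(w)-w\|\le C_{K'}\int_u^t k_T$; converting back to $d_M$ on a compact set containing both $w$ and $\varphi_{u,t}(w)$ produces EF3 with integrand $\const\cdot k_T$. To apply the distortion estimate I must know that the orbit $\{\varphi_{s,u}(z):z\in K,\ s\le u\le T\}$ stays in a fixed $K'\subset\subset D$; I would obtain this by a continuation argument, partitioning $[0,T]$ into finitely many subintervals on which $\int k_T$ is small (possible since $k_T\in L^1$), using the distortion estimate on each piece to bound the Euclidean displacement, and invoking completeness of $k_D$ to prevent escape to $\partial D$.

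The \emph{hard part} is the distortion estimate itself, an effective (linear-in-the-data) form of Cartan's uniqueness theorem for the balanced convex domain $D$. Writing $\psi(w)=\psi(0)+d\psi_0\,w+Q(w)$ with $Q$ collecting the terms of order $\ge 2$, the estimate amounts to controlling the higher-order part $\sup_{K'}\|Q\|$ by $\|\psi(0)\|+\|d\psi_0-\id\|$. This is exactly where the balanced convex hypothesis is essential: the gauge $\mu_D$ exhibits $D$ as the unit ball of a norm centered at $0$, so that $\psi-\id$ takes values in the bounded set $2D$ and Cauchy estimates bound all homogeneous components of $\psi-\id$ uniformly over the whole family of self-maps. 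The qualitative vanishing ($Q=0$ when $\psi(0)=0$ and $d\psi_0=\id$) is Cartan's theorem, proved by iterating $\psi$ and noting that the lowest homogeneous term of $\psi^{\circ m}-\id$ grows linearly in $m$ while staying uniformly bounded. I would upgrade this to the quantitative linear bound either by a normal-families compactness argument on the family of self-maps or by making the iteration estimate effective, and I expect this rigidity estimate to be the most delicate point of the whole proof.
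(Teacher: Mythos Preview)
Your outline is correct, but the route is genuinely different from the paper's. The paper does not verify EF3 directly. Instead it sets $G_{n,s}(z):=n(\varphi_{s,s+1/n}(z)-z)$, observes via a result of Reich--Shoikhet for convex domains that each $G_{n,s}$ is an \emph{infinitesimal generator}, and then applies the growth estimate of Lemma~\ref{Br-Shthm}, namely $\|G(z)\|\le 5\|G(0)\|+\tfrac{4\|z\|}{(1-\|z\|)^2}V(dG_0)$. Together with \eqref{cond1} this yields the uniform bound \eqref{claima} that drives the construction in Proposition~\ref{A3}; one then extracts an $L^d$-Herglotz field by measurable selection and invokes Theorem~\ref{main} to conclude that $(\varphi_{s,t})$ is an $L^d$-evolution family. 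In particular the paper routes everything through the Loewner ODE machinery already built, and never needs your continuation argument for orbit compactness.

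The two arguments share the same analytic core in disguise: your ``quantitative Cartan'' estimate $\sup_{K'}\|\psi(w)-w\|\le C_{K'}\big(\|\psi(0)\|+\|d\psi_0-{\sf id}\|\big)$ is precisely Lemma~\ref{Br-Shthm} applied to $G=\psi-{\sf id}$, once one knows (again Reich--Shoikhet) that $\psi-{\sf id}$ is an infinitesimal generator whenever $\psi$ is a holomorphic self-map of a bounded convex domain. So the ``hard part'' you single out is exactly the lemma the paper imports from the infinitesimal-generator literature rather than reproving as a Cartan-type rigidity. One caution about your proposed proofs of that step: a bare normal-families argument gives only the \emph{qualitative} conclusion (if $\psi(0)\to 0$ and $d\psi_0\to{\sf id}$ then $\psi\to{\sf id}$ locally uniformly), not the linear rate you need to land in $L^d$; to get the Lipschitz dependence you would have to make the Cartan iteration genuinely effective, or---more economically---use the observation above that $\psi-{\sf id}$ is an infinitesimal generator and cite Lemma~\ref{Br-Shthm}.
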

The proof is based on some type of distortion theorem for infinitesimal generators. Let $A: \C^n \to \C^n$ be a continuous linear operator. As customary, we define
\[
V(A):=\sup \{|\langle A(v), v\rangle| : \|v\|=1\}.
\]

The following result is proved in its generality for the case of Banach spaces in \cite{Br-Sh}. In case $G(0)=0$ and $V(T)> 0$ it is proved in \cite{GHK2002}.

\begin{lemma}\label{Br-Shthm}
Let $D\subset \C^n$ be a bounded convex domain. Let $G(z)=G(0)+Tz+\sum_{j\geq 2} Q_j(z)$ be an infinitesimal generator in $D$. Then for all $z\in D$
\begin{equation}\label{estimate}
\|G(z)\|\leq 5\|G(0)\|+ \frac{4\|z\|}{(1-\|z\|)^2}V(T).
\end{equation}
\end{lemma}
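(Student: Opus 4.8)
The plan is to separate the two contributions on the right-hand side of \eqref{estimate}: the term carrying $V(T)$ should come from the part of $G$ that vanishes at $0$ and has linear part $T$, while the term carrying $\|G(0)\|$ should come from the constant value $G(0)$. Accordingly I would first dispose of the normalized case $G(0)=0$, which is precisely \cite{GHK2002} when $V(T)>0$ (the degenerate case $V(T)=0$ following by applying the estimate to $G+\varepsilon L$ for a suitable linear generator $L$ and letting $\varepsilon\to 0$), and then reduce the general statement to it.

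For the mechanism behind the $V(T)$-term I would pass to one complex variable by slicing. Fixing a unit vector $x$ and writing $z=\|z\|x$, consider the holomorphic slice function $g(\zeta):=\la G(\zeta x),x\ra$ on $\D$, whose Taylor coefficients are $g(0)=\la G(0),x\ra$ and $g'(0)=\la Tx,x\ra$, so that $|g'(0)|\le V(T)$. Using the characterization of generators (Theorem \ref{autonomo}, or equivalently the boundary dissipativity condition obtained from it), in the model case $D=\B^n$ one checks that the generator property forces $\Re\big(g(\zeta)/\zeta\big)\le 0$ on $\de\D$. Subtracting the simple pole of $g(\zeta)/\zeta$ at the origin and invoking the maximum principle together with the Riesz--Herglotz representation produces growth and distortion bounds for $g$; the same argument applied to the full map $\zeta\mapsto G(\zeta x)$ (a Cauchy/distortion estimate, which is what upgrades one factor $(1-\|z\|)^{-1}$ to $(1-\|z\|)^{-2}$) yields, in the normalized case, a bound of the form $\|G(z)\|\le \|z\|(1-\|z\|)^{-2}V(T)$ up to an absolute constant, the constant $4$ absorbing the numerical factors. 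This is exactly the class-$\mathcal M$ type growth estimate of \cite{GHK2002}.

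The delicate point --- and the step I expect to be the main obstacle --- is twofold. First, obtaining a \emph{bounded} coefficient (the constant $5$) in front of $\|G(0)\|$: a naive slice through the origin is far too lossy here, since the constant term $G(0)$ is heavily damped near $\de D$ by the requirement that the flow stay inside $D$, and the clean way to capture this is through the nonexpansiveness (with respect to $k_M$) of the resolvents $(\id-tG)^{-1}$ in the Reich--Shoikhet range condition, which a priori controls the influence of $G(0)$ uniformly up to the boundary. Second, passing from the directional estimates to the full Euclidean norm $\|G(z)\|=\sup_{\|w\|=1}|\la G(z),w\ra|$ over \emph{all} directions $w$, and doing so on a general, possibly non-smooth, bounded convex domain where outer normals must be replaced by supporting functionals: one must choose these functionals so that the dissipativity sign is preserved and then assemble the one-variable bounds uniformly in $w$. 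This is the content of the Banach-space argument of \cite{Br-Sh}, to which I would ultimately appeal, the finite-dimensional convex case being a specialization.
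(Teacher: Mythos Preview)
The paper does not actually prove this lemma: it simply records that the general case is established in \cite{Br-Sh} and that the normalized case $G(0)=0$, $V(T)>0$ is in \cite{GHK2002}. Your proposal ends up in exactly the same place --- you invoke \cite{GHK2002} for the normalized case and explicitly say you ``would ultimately appeal'' to \cite{Br-Sh} for the general one --- so there is no substantive divergence to compare.

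That said, one word of caution about the heuristic reduction you outline. You propose to ``first dispose of the normalized case $G(0)=0$\dots\ and then reduce the general statement to it,'' but the obvious decomposition $G=G(0)+(G-G(0))$ does not immediately work: the translate $G-G(0)$ need not be an infinitesimal generator on $D$ (a nonzero constant vector field is never one on a bounded domain, so subtracting it can destroy the generator property), and the cone structure of Corollary~\ref{cono} does not allow subtraction. This is precisely why the $5\|G(0)\|$ term cannot be obtained by a naive splitting and why the resolvent/range-condition machinery of Reich--Shoikhet that you allude to is genuinely needed; you flag this as ``the delicate point,'' which is correct, but your sketch does not indicate how to bridge it beyond citing \cite{Br-Sh}. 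Also note that your slicing argument is written for $D=\B^n$, whereas the lemma is stated for a general bounded convex $D$; the passage from radial slices and the inequality $\Re(g(\zeta)/\zeta)\le 0$ on $\partial\D$ to supporting-functional arguments on a non-smooth convex body is another place where the actual work lives in \cite{Br-Sh}.
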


\begin{proof}[Proof of Proposition \ref{convexLd}]
One direction is clear. As for the other, assume that \eqref{cond1} holds and fix $T>0$. Let $G_{n,s}(z):=n(\varphi_{s,s+\frac{1}{n}}(z)-z)$ for $n\in \N$ and $z\in D$. By \cite[Proposition 4.3]{reich-shoikhetII}, $G_{n,s}(z)$ is an infinitesimal generator in $D$. Hence Lemma \ref{Br-Shthm} and \eqref{cond1} implies \eqref{claima}. From here we can argue similarly to the proof of Proposition \ref{A3} and obtain an $L^d$-Herglotz vector field $G(z,s)$ which satisfies \eqref{L-ODE}. Then by Theorem \ref{main} the family $(\v_{s,t})$ is an $L^d$-evolution family.
\end{proof}

\subsection{Relations between semigroups and evolution families} Let $G(z,t)$ be a Herglotz vector field on a bounded convex domain $D$ with associated evolution family $(\v_{s,t})$. Hence, by \eqref{L-ODE}, for a.e. $s\geq 0$, it holds
\[
G(z,s)=\lim_{t\to s^+} \frac{\v_{s,t}(z)-z}{t-s}.
\]
In particular, if $(\phi^s_r)_{r\geq 0}$ is the semigroup associated to the infinitesimal generator $z\mapsto G(z,s)$, the product formula implies that
\[
\phi^s_r=\lim_{m\to \infty} (\v_{s,s+\frac{r}{m}})^{\circ m}
\]
uniformly on compacta of $D$. Thus, there is a link between the families of semigroups generators by $G(\cdot, s)$ when $s\geq 0$ and the evolution family given by \eqref{L-ODE}. Thus we have the following natural philosophical question:

\medskip

{\bf Question 2}: What are the relations between the dynamics of the families of semigroups $(\phi^s_r)$ and the dynamics of the evolution family $(\v_{s,t})$?

\medskip

The previous question is open even for the case of the unit disc $\D\subset \C$.

\subsection{The embedding problem} Let $f: M \to M$ be an univalent self-map of a complete hyperbolic manifold.

\medskip

{\bf Question 3}: When does there exists an evolution family $(\v_{s,t})$ order $d\geq 1$ such that $f=\v_{0,1}$?

\end{document}